\newtheorem{theorem}{Theorem}
\newtheorem{lemma}{Lemma}
\newtheorem{proposition}{Proposition}
\newtheorem{question}{Question}
\theoremstyle{remark}
\newtheorem{remark}[theorem]{Remark}
\newcommand{\ZZ}{\mathbb{Z}}
\newcommand{\Zp}{\ZZ_{p}}
\newcommand{\QQ}{\mathbb{Q}}
\newcommand{\CC}{\mathbb{C}}
\newcommand{\HH}{\mathbb{H}}
\newcommand{\PP}{\mathbb{P}}
\newcommand{\Fp}{\mathbb{F}_{p}}
\newcommand{\Fl}{\mathbb{F}_{\ell}}
\newcommand{\cA}{\mathcal{A}}
\newcommand{\cC}{\mathcal{C}}
\newcommand{\cL}{\mathcal{L}}
\newcommand{\cQ}{\mathcal{Q}}
\newcommand{\cR}{\mathcal{R}}
\newcommand{\cV}{\mathcal{V}}
\newcommand{\cX}{\mathcal{X}}
\newcommand{\tV}{\tilde{V}}
\newcommand{\tX}{\tilde{X}}
\newcommand{\Fel}{\mathbb{F}_{11}}
\newcommand{\XDt}{X_{D_{10}}(11)}
\newcommand{\calXDt}{\mathcal{X}_{D_{10}}}
\newcommand{\tilcalXDt}{\tilde{\mathcal{X}}_{D_{10}}}
\newcommand{\XpDt}{X^+_{D_{10}}(11)}
\newcommand{\JDt}{J_{D_{10}}}
\newcommand{\GDt}{\Gamma_{D_{10}}}
\newcommand{\phiXo}{\phi_{X_0}}
\newcommand{\phiXDt}{\phi_{X_{D_{10}}}}
\newcommand{\fp}{\mathfrak{p}}
\newcommand{\Ffp}{\mathbb{F}_{\mathfrak{p}}}
\newcommand{\GL}{\operatorname{GL}}
\newcommand{\PGL}{\operatorname{PGL}}
\newcommand{\SL}{\operatorname{SL}}
\newcommand{\rank}{\operatorname{rank}}
\newcommand{\Tor}{\operatorname{Tor}}
\newcommand{\Gal}{\operatorname{Gal}}
\newcommand{\Tr}{\operatorname{Tr}}
\newcommand{\red}{\operatorname{red}}
\newcommand{\image}{\operatorname{Im}}
\newcommand{\Aut}{\operatorname{Aut}}
\newcommand{\id}{id}
\title{Local-global principle for 11-isogenies of elliptic curves is true over quadratic fields}
\date{\today}
\keywords{Local-global principle, isogenies of elliptic curves, exceptional primes, modular curves, symmetric Chabauty}
\subjclass[2020]{Primary 14G05, 11G18, Secondary 11G05}
\author{Stevan Gajovi\'{c}}
\address{Max Planck Institute for Mathematics in Bonn, Charles University Prague} 
\address{Faculty of Mathematics and Physics,	Charles University,	Ke Karlovu 3, 121 16 Praha 2, Czech Republic}
\email{stevangajovic@gmail.com}
\author{Jeroen Hanselman}
\address{
  AG Algebra, Geometrie und Computeralgebra,
  RPTU Kaiserslautern-Landau
}
\email{hanselm@rptu.de}
\author{Angelos Koutsianas}
\address{Department of Mathematics, Aristotle University of Thessaloniki\\
54124, Thessaloniki, Greece.} 
\email{akoutsianas@math.auth.gr}
\begin{document}

\begin{abstract}
In this paper, we prove that the local-global principle for $11$-isogenies for elliptic curves over quadratic fields holds. This gives a positive answer to a conjecture by Banwait and Cremona \cite[Conjecture 1.14]{BanwaitCremona14}. The proof is based on the determination of the set of quadratic points on the modular curve $\XDt$.
\end{abstract}

\maketitle

\section{Introduction}

Let $K$ be a number field and let $E$ be an elliptic curve over $K$. If $\ell$ is a prime and $E$ admits a $K$-rational $\ell$-isogeny, then it is easy to show that the reduction $\tilde{E}_{\fp}/\Ffp$ of $E$ at a prime $\fp$ of good reduction of $E$ also has a $\Ffp$-rational $\ell$-isogeny, where $\Ffp$ is the residue field of $\fp$. It is natural to ask the converse question:

\begin{question}\label{question:main}
If $\tilde{E}_{\fp}/\Ffp$ admits a $\Ffp$-rational $\ell$-isogeny for a density $1$ set of primes $\fp$, then does $E/K$ admit a $K$-rational $\ell$-isogeny? 
\end{question}

In \cite{Sutherland12} Sutherland studied the above question and explained that in most cases, we can expect the answer to be ``yes''.  However, the most interesting case is when the answer is ``no'' and this local-global property is violated. 

The existence of a $K$-rational $\ell$-isogeny for $E$ depends only on the $j$-invariant $j(E)$ of $E$ when $j(E)\neq 0,1728$; in other words, if $E^\prime/K$ is an elliptic curve with $j(E)=j(E^\prime)$ then the answer is ``no'' for $E/K$ if and only if it is ``no'' for $E^\prime/K$. Following Sutherland, a pair $(\ell, j_0)$ with $j_0\neq 0,1728$ and $j_0\in K$ is called \textit{exceptional for $K$} if there exists an elliptic curve $E/K$ with $j(E)=j_0$ such that the answer to Question \ref{question:main} is ``no''. The prime $\ell$ of an exceptional pair is called an \textit{exceptional prime for $K$}.

We denote by $\rho_{E,\ell}$ the residual Galois representation that arises from the action of $G_K$ on $E(\bar{K})[\ell]$. We denote by $G_{E,\ell}:=\image(\rho_{E,\ell})\subseteq \GL_2(\Fl)$ and $H_{E,\ell}$ is the image of $G_{E,\ell}$ in $\PGL_2(\Fl)$. For $j(E)\neq 0,1728$ the conjugacy class of $H_{E,\ell}$ in $\PGL_2(\Fl)$ depends only on $j(E)$. 

For any number field $K$, it is known that the prime $\ell=2$ is never an exceptional prime by \cite[Remark 2.5]{Anni14}. Therefore, we assume that $\ell$ is odd and set $\ell^{*}=(-1)^{(\ell-1)/2}\ell$. We denote by $D_{2n}$ the dihedral group of order $2n$.

\begin{theorem}[{\cite[Theorem 1, Lemma 1]{Sutherland12}}]\label{thm:Sutherland}
Suppose $\sqrt{\ell^*}\not\in K$ and $(\ell,j_0)$ is an exceptional pair for $K$. Let $E/K$ be an elliptic curve with $j(E)=j_0$. Then, the following statements hold:
\begin{enumerate}
    \item The group $H_{E,\ell}$ is isomorphic to $D_{2n}$, where $n>1$ is an odd divisor of $(\ell-1)/2$.

    \item $\ell\equiv 3\pmod{4}$.

    \item The group $G_{E,\ell}$ is contained in the normaliser of a split Cartan subgroup of $\GL_2(\Fl)$.

    \item $E$ obtains a rational $\ell$-isogeny over $K(\sqrt{\ell^*})$.
\end{enumerate}
\end{theorem}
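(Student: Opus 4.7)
The plan is to translate the hypotheses into group-theoretic constraints on $G_{E,\ell}$, apply Dickson's classification of subgroups of $\PGL_2(\Fl)$, and then exploit the surjectivity of the mod-$\ell$ cyclotomic character to extract the arithmetic conclusions (1), (2), (4). First I would apply the Chebotarev density theorem: the hypothesis that $\tilde E_\fp/\Ffp$ admits an $\ell$-isogeny for a density $1$ set of primes $\fp$ is equivalent to the purely group-theoretic assertion that every $g\in G_{E,\ell}$ has an eigenvalue in $\Fl$, i.e.\ reducible characteristic polynomial, while the failure of a $K$-rational $\ell$-isogeny says that $G_{E,\ell}$ is contained in no Borel subgroup of $\GL_2(\Fl)$. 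So the task reduces to classifying subgroups of $\GL_2(\Fl)$ with these two properties.

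Next I would invoke Dickson: the projective image $H_{E,\ell}$ lies in a Borel (excluded), in the normalizer of a split or non-split Cartan, in an exceptional group ($A_4,S_4,A_5$), or in a subgroup containing $\PSL_2(\Fl)$. If $H_{E,\ell}$ is in the normalizer of a non-split Cartan $C_{ns}$, then non-scalar elements of the Cartan have eigenvalues in $\mathbb{F}_{\ell^2}\setminus\Fl$, forcing $G_{E,\ell}\cap C_{ns}$ to be scalar; the lone coset outside is then generated by an involution whose $\Fl$-diagonalisability (forced by the local condition) places $G_{E,\ell}$ in a split Cartan, a Borel, contradiction. For each exceptional projective type and for the $\PSL_2(\Fl)$-containing case, one exhibits an element of $G_{E,\ell}$ whose characteristic polynomial is irreducible over $\Fl$, by examining projective torsion of orders $3,4,5$ and the form of their scalar-twisted lifts. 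Only the normalizer $N_s$ of a split Cartan survives, establishing (3), with $H_{E,\ell}\cong D_{2n}$ where $n=[G_{E,\ell}\cap C_s : Z\cap G_{E,\ell}]$.

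Now I would use that $\sqrt{\ell^*}\notin K$ makes $\det\circ\rho_{E,\ell}=\omega_\ell\colon G_K\to\Fl^*$ surjective, since $\QQ(\sqrt{\ell^*})$ is the unique quadratic subfield of $\QQ(\mu_\ell)$. For $g\in G_{E,\ell}\setminus C_s$, one has $\Tr g=0$, so the eigenvalues are $\pm\sqrt{-\det g}$ and the local condition forces $-\det g\in(\Fl^*)^2$. A coset computation in $\det(G_{E,\ell})=\Fl^*$, using that $\det(G_{E,\ell}\setminus C_s)$ is a single coset of the subgroup $\det(G_{E,\ell}\cap C_s)$, forces $-1\notin(\Fl^*)^2$, i.e.\ $\ell\equiv 3\pmod 4$, which is (2), and also $\det(G_{E,\ell}\cap C_s)=(\Fl^*)^2$. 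Combining with the identity $\det=\chi\cdot b^2$ for the ratio-of-eigenvalues character $\chi\colon C_s\to\Fl^*$ cutting out $C_s/Z$, one concludes $\chi(G_{E,\ell}\cap C_s)\subseteq(\Fl^*)^2$, hence $n\mid(\ell-1)/2$. Oddness of $n$ follows because a $\chi$-value of order $2$ would yield $\mathrm{diag}(a,-a)\in G_{E,\ell}\cap C_s$ with $\det=-a^2$, a square only if $\ell\equiv 1\pmod 4$; the case $n=1$ is excluded exactly as in the non-split Cartan step. Finally, $K(\sqrt{\ell^*})/K$ is precisely cut out by $\omega_\ell\bmod(\Fl^*)^2$, so $\rho_{E,\ell}(G_{K(\sqrt{\ell^*})})=G_{E,\ell}\cap C_s$, which lies in a Borel, yielding (4).

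The main obstacle is the exceptional-group case analysis in Dickson's classification: one must carefully lift each projective conjugacy class from $\PGL_2(\Fl)$ to $\GL_2(\Fl)$ and check that some lift has irreducible characteristic polynomial after accounting for scalar twists, handling the arithmetic restrictions on $\ell$ under which $A_4,S_4,A_5$ can embed projectively. Once this bookkeeping is done, the remaining deductions are essentially formal consequences of Chebotarev plus the surjectivity of $\omega_\ell$.
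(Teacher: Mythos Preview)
The paper does not give its own proof of this theorem: it is quoted verbatim from \cite{Sutherland12} as background, with the citation \cite[Theorem 1, Lemma 1]{Sutherland12} serving in place of a proof. So there is nothing in the paper to compare your argument against.

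That said, your sketch is essentially a correct reconstruction of Sutherland's original proof. The Chebotarev reduction to ``every element of $G_{E,\ell}$ has an $\Fl$-eigenvalue, yet $G_{E,\ell}$ lies in no Borel'' is exactly the starting point; Dickson's classification then eliminates all projective images except the dihedral one inside the split Cartan normaliser, and the surjectivity of the cyclotomic character (from $\sqrt{\ell^*}\notin K$) drives the arithmetic conclusions. Your coset argument for $\ell\equiv 3\pmod 4$ is right: if $\ell\equiv 1\pmod 4$ then every $g\notin C_s$ has square determinant, and multiplying such a $g$ by any $h\in G_{E,\ell}\cap C_s$ with non-square determinant yields an element outside $C_s$ with non-square determinant, a contradiction; hence $\det(G_{E,\ell}\cap C_s)\subseteq(\Fl^*)^2$, which is incompatible with surjectivity of $\det$. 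From $\det(G_{E,\ell}\cap C_s)=(\Fl^*)^2$ one then reads off $n\mid(\ell-1)/2$, oddness of $n$, and the identification of the sign character $\varepsilon$ with the Legendre character of $\omega_\ell$, giving (4). The one place where you would need to supply genuine detail is, as you note yourself, the elimination of $A_4$, $S_4$, $A_5$, and the groups containing $\PSL_2(\Fl)$; this is a finite case check in Sutherland's paper and is routine once set up.
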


\begin{remark}\label{rmk:K-rational-points-XD2n}
As a consequence of Theorem~\ref{thm:Sutherland}, when $\sqrt{\ell^*}\notin K$, then any exceptional pair gives rise to a non-cuspidal $K$-rational point on the modular curve $X_{D_{2n}}(\ell)$ (see the definition in Section \ref{sec:X10}) for some odd divisor $n>1$ of $\frac{\ell-1}{2}$.    
\end{remark}

Using Theorem \ref{thm:Sutherland} Sutherland proves that the only exceptional pair for $\QQ$ is $\left(7,\frac{2268945}{128}\right)$ \cite[Theorem 2]{Sutherland12}. In \cite[Proposition 1.3]{BanwaitCremona14} the authors prove an analogous result to Theorem \ref{thm:Sutherland} for the case $\sqrt{\ell^*}\in K$.

\begin{theorem}[{\cite[Proposition 1.3]{BanwaitCremona14}}]\label{thm:Banwait-Cremona}
Suppose $\sqrt{\ell^*}\in K$. Then $(\ell,j_0)$ is exceptional for $K$ if and only if one of the following holds for elliptic curves $E/K$ with $j(E)=j_0$:
\begin{itemize}
    \item $H_{E,\ell}\simeq A_4$ and $\ell\equiv 1\pmod{12}$.

    \item $H_{E,\ell}\simeq S_4$ and $\ell\equiv 1\pmod{24}$.

    \item $H_{E,\ell}\simeq A_5$ and $\ell\equiv 1\pmod{60}$.

    \item $H_{E,\ell}\simeq D_{2n}$ and $\ell\equiv 1\pmod{4}$, $n>1$ is a divisor of $(\ell-1)/2$, and $G_{E,\ell}$ lies in a normaliser of a split Cartan subgroup.
\end{itemize}
\end{theorem}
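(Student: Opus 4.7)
The plan is to translate the exceptional condition into a group-theoretic problem for the image $G_{E,\ell}\subseteq\GL_2(\Fl)$ and then invoke Dickson's classification of finite subgroups of $\PGL_2(\Fl)$. By a Chebotarev density argument, as in the proof of Theorem~\ref{thm:Sutherland}, the pair $(\ell,j_0)$ is exceptional precisely when every element of $G_{E,\ell}$ has an $\Fl$-rational eigenvalue (equivalently, every element of $H_{E,\ell}$ fixes a point of $\PP^1(\Fl)$), while $G_{E,\ell}$ itself does \emph{not} stabilise a line. The hypothesis $\sqrt{\ell^*}\in K$ feeds in through the fact that $\QQ(\sqrt{\ell^*})$ is the unique quadratic subfield of $\QQ(\zeta_\ell)$, so $\det\rho_{E,\ell}$ (the mod-$\ell$ cyclotomic character) has image in the squares $(\Fl^*)^2$.

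The central computation is as follows. If $\bar g\in\PGL_2(\Fl)$ has order $m$, then a lift $g\in\GL_2(\Fl)$ satisfies $g^m=\lambda I$ and has eigenvalues $\alpha$, $\alpha\zeta_m^{-1}$ for some primitive $m$-th root of unity $\zeta_m$; combining these with $\det(g)=\alpha^2/\zeta_m$ yields the identity $\alpha^2=\zeta_m\det(g)$. Consequently $\bar g$ has $\Fl$-rational fixed points if and only if $\zeta_m\in\Fl$ and $\zeta_m\det(g)\in(\Fl^*)^2$; given $\det(g)\in(\Fl^*)^2$, both conditions collapse to $\zeta_m\in(\Fl^*)^2$, i.e.\ $m\mid(\ell-1)/2$. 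I would then run through Dickson's list: cyclic $H$ is excluded because a generator's fixed point is fixed by the whole group, producing a $K$-rational isogeny; $\PSL_2(\Fl)$ and $\PGL_2(\Fl)$ are excluded because they contain non-split semisimple elements lacking rational fixed points; Borel-type $H$ already yields a $K$-rational isogeny.

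The remaining cases match the statement. For $H\cong D_{2n}$, applying the criterion to the reflections (order $2$) gives $\ell\equiv1\pmod 4$, and to the rotations forces the Cartan to be split with $n\mid(\ell-1)/2$. For the exceptional groups, requiring $m\mid(\ell-1)/2$ for every element order $m$ that occurs translates into $\mathrm{lcm}(2,3)=6\mid(\ell-1)/2$ for $A_4$, $\mathrm{lcm}(2,3,4)=12\mid(\ell-1)/2$ for $S_4$, and $\mathrm{lcm}(2,3,5)=30\mid(\ell-1)/2$ for $A_5$, giving $\ell\equiv1\pmod{12}$, $\pmod{24}$, $\pmod{60}$ respectively. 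The converse direction is immediate: since $A_4$, $S_4$, $A_5$, and $D_{2n}$ for $n\geq2$ do not fix a point on $\PP^1$, whenever the listed conditions hold $(\ell,j_0)$ is genuinely exceptional. The conceptual kernel is the short identity $\alpha^2=\zeta_m\det(g)$ combined with the squareness of the determinant; the main obstacle is careful bookkeeping, particularly in the dihedral case where both ``$\ell\equiv 1\pmod 4$'' and ``$n\mid(\ell-1)/2$'' must be extracted (the former being automatic when $n$ is even but a genuine extra constraint when $n$ is odd), and in confirming that the stated congruences are compatible with the actual existence of $A_4$, $S_4$, $A_5$ as subgroups of $\PGL_2(\Fl)$.
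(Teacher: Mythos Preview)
The paper does not contain a proof of this statement: it is quoted verbatim from \cite[Proposition~1.3]{BanwaitCremona14} and used as a black box, so there is no ``paper's own proof'' to compare against. Your outline is the standard argument, and it is essentially the one given by Banwait and Cremona (building on Sutherland's Lemma~1): translate exceptionality via Chebotarev into ``every element of $G_{E,\ell}$ has an $\Fl$-rational eigenvector but $G_{E,\ell}$ fixes no line'', use $\sqrt{\ell^*}\in K$ to force $\det(G_{E,\ell})\subseteq(\Fl^*)^2$, reduce the eigenvector condition for an element of projective order $m$ to $m\mid(\ell-1)/2$, and then walk through Dickson's list.

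Your sketch is correct. Two small points worth tightening if you write it out in full. First, in the dihedral case you should make explicit why the index-$2$ cyclic subgroup, once it consists of elements with $\Fl$-rational eigenvalues, forces $G_{E,\ell}$ into the normaliser of a \emph{split} (rather than nonsplit) Cartan; this is immediate for $n>2$ since $n\mid\ell-1$ and $\gcd(\ell-1,\ell+1)=2$, but the case $n=2$ needs a word. Second, your closing remark about ``compatibility with the actual existence of $A_4$, $S_4$, $A_5$ as subgroups'' is not needed for the equivalence as stated, since the hypothesis $H_{E,\ell}\simeq A_4$ etc.\ already presupposes such a subgroup exists; the congruence conditions are purely the output of the element-order analysis.
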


At the same time, Anni \cite{Anni14} focuses only on exceptional primes and proves that for a given $K$ there are only finitely many.

\begin{theorem}[{\cite[Main Theorem]{Anni14}}]\label{thm:Anni}
Let $K$ be a number field of degree $d$ over $\QQ$ and discriminant $\Delta$, and let $\ell_K:=\max\{|\Delta|, 6d+1\}$. The following holds:
\begin{itemize}
    \item If $(\ell, j_0)$ is an exceptional pair for $K$ then $\ell\leq\ell_K$.
    
    \item There are only finitely many exceptional pairs for $K$ with $7<\ell\leq\ell_K$.
\end{itemize}
\end{theorem}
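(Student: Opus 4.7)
The plan is to combine the structural theorems of Sutherland and Banwait--Cremona (Theorems~\ref{thm:Sutherland} and~\ref{thm:Banwait-Cremona}) with local analysis of $\rho_{E,\ell}$ at primes above $\ell$, and then with finiteness of rational points on the associated modular curves. Given an exceptional pair $(\ell,j_0)$, those theorems already reduce $G_{E,\ell}$ to one of a few explicit shapes: dihedral in the normaliser of a split Cartan, or exotic projective ($A_4$, $S_4$, or $A_5$).

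For the bound $\ell\le\ell_K$, I would first handle the dihedral case. After passing to the index-two subgroup of $G_K$ that preserves the two eigenlines of the Cartan, $\rho_{E,\ell}$ becomes a direct sum $\chi_1\oplus\chi_2$ of characters valued in $\Fl^\times$, with $\chi_1\chi_2$ equal to the mod-$\ell$ cyclotomic character, and $\chi_1/\chi_2$ of order dividing $2n$ for some odd $n\mid(\ell-1)/2$. Restricting $\chi_i$ to inertia at a prime of $K$ above $\ell$ and applying Raynaud's classification of fundamental characters of finite flat group schemes, the shape of $\chi_1/\chi_2$ is severely constrained; when $\ell$ is unramified in $K$ (i.e.\ $\ell\nmid\Delta$) the inertial restriction becomes trivial, and together with a standard Weil-type estimate on Frobenius traces at an auxiliary prime of small norm this yields a contradiction once $\ell>6d+1$. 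The exotic cases $H_{E,\ell}\in\{A_4,S_4,A_5\}$ I would treat by the same template: the congruences $\ell\equiv 1\pmod{12,24,60}$ from Theorem~\ref{thm:Banwait-Cremona}, combined with the fact that $\det\rho_{E,\ell}$ is cyclotomic and Dickson's classification of subgroups of $\PGL_2(\Fl)$, force $K$ to contain specific roots of unity modulo $\ell$, which is incompatible with a fixed $d$ once $\ell$ is large and coprime to $\Delta$.

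For finiteness of exceptional pairs with $7<\ell\le\ell_K$, each such pair corresponds (as in Remark~\ref{rmk:K-rational-points-XD2n}) to a non-cuspidal, non-CM $K$-rational point on a modular curve $X_H(\ell)$ attached to one of the subgroups $H\subseteq\GL_2(\Fl)$ above. Since $\ell$ is now bounded, only finitely many such curves appear; whenever $X_H(\ell)$ has genus at least two, Faltings' theorem provides the desired finiteness. The handful of $\ell$ for which some relevant modular curve has genus $0$ or $1$ (and hence potentially infinitely many $K$-points) all satisfy $\ell\le 7$ and are excluded by the hypothesis $\ell>7$, which is precisely why this range is singled out.

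The main obstacle I expect is the ramified-at-$\ell$ case $\ell\mid\Delta$: once $\ell$ ramifies in $K$, the fundamental characters on inertia can act nontrivially on the Cartan eigenlines, destroying the clean congruence argument and explaining why the bound must involve $|\Delta|$ rather than being uniform in $d$ alone. A secondary technicality is the bookkeeping needed in the exotic-projective case to promote information about $H_{E,\ell}\subseteq\PGL_2(\Fl)$ to information about $G_{E,\ell}\subseteq\GL_2(\Fl)$ and then to a cyclotomic subfield of $K$; this is where the interplay between the determinant and Dickson's classification has to be carried out carefully.
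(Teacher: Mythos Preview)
The paper does not prove this statement: Theorem~\ref{thm:Anni} is quoted from \cite{Anni14} as a background result, with no proof or sketch supplied. There is therefore no proof in this paper against which to compare your proposal.

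For what it is worth, your outline is broadly in the spirit of Anni's actual argument in \cite{Anni14}: the dihedral/exotic dichotomy coming from Theorems~\ref{thm:Sutherland} and~\ref{thm:Banwait-Cremona}, local analysis of $\rho_{E,\ell}$ at primes above $\ell$ via fundamental characters, and Faltings' theorem for the finiteness statement once $\ell>7$ are all ingredients there. One point where your sketch diverges from the source is the origin of the constant $6d+1$: it does not come from a Weil-type Frobenius bound at an auxiliary small prime as you suggest, but from bounding the order of the image of inertia at a prime above $\ell$ in terms of the ramification index (hence in terms of $d$), which then constrains the admissible dihedral orders. If you want a genuine comparison, it has to be made against \cite{Anni14} directly; the present paper only invokes the result.
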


\begin{remark}
The cases $\ell=2,3,5,7$ are also covered in \cite{Anni14}. In particular, the primes $\ell=2,3$ are not exceptional for any $K$, the prime $\ell=5$ is exceptional if and only if $\sqrt{5}\in K$ and the prime $\ell=7$ appears in infinitely many exceptional pairs for $K$ if and only if the rank over $K$ of the elliptic curve 
\begin{equation*}
    y^2=x^3-1715x+33614,
\end{equation*}
is positive.
\end{remark}

From Theorem \ref{thm:Anni} we understand that there are two important directions in which we can look for exceptional primes. 
\begin{itemize}
    \item Either we fix $K$ and determine all the exceptional primes with $\ell\leq\ell_K$,

    \item or, we fix $\ell$ and a ``suitable'' family $S$ of number fields, and determine all the number fields in $S$ for which $\ell$ is an exceptional prime.
\end{itemize}

\begin{remark}
According to Theorem \ref{thm:Anni} a natural choice of the family $S$ is the set of all number fields of a fixed degree $d$. This choice becomes even more natural because, for odd degree extensions $K$, the bound $\ell_K=6d+1$ is a uniform bound with respect to $d$ \cite[Theorem 4.3]{Anni14}.
\end{remark}

Having all the above results in mind, in particular the fact that $11$ is always in the range of potential exceptional primes when $K$ is a quadratic field, Banwait and Cremona conjectured that the prime $11$ is not an exceptional prime for any quadratic field $K$ \cite[Conjecture 1.14]{BanwaitCremona14}. In this paper, we give a positive answer to the conjecture.

\begin{theorem}\label{thm:main_local_global}
The prime $11$ is not an exceptional prime for any quadratic field.
\end{theorem}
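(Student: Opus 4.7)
The plan is to split cases on whether $\sqrt{-11}\in K$, reduce the remaining case to the determination of the quadratic points of $\XDt$, and dismiss these points via an explicit analysis. Since $11\equiv 3\pmod 4$ we have $\ell^{*}=-11$.

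If $\sqrt{-11}\in K$, then $K=\QQ(\sqrt{-11})$ and Theorem~\ref{thm:Banwait-Cremona} is an ``if and only if''. Each of its four conditions demands $\ell\equiv 1\pmod 4$ (modulo $12$, $24$, $60$, or $4$ respectively); since $11\not\equiv 1\pmod 4$, none is satisfied, so no pair is exceptional over $\QQ(\sqrt{-11})$. If $\sqrt{-11}\notin K$, then Theorem~\ref{thm:Sutherland} forces $H_{E,11}\cong D_{2n}$ with $n>1$ an odd divisor of $(11-1)/2=5$, so $n=5$ and $H_{E,11}\cong D_{10}$. By Remark~\ref{rmk:K-rational-points-XD2n} an exceptional pair $(11,j_0)$ then produces a non-cuspidal $K$-rational point on $\XDt$. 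Conversely, such a point need not be exceptional: it may carry a genuine $K$-rational $11$-isogeny, have CM, or have $j$-invariant already in $\QQ$. So it suffices to enumerate all non-cuspidal quadratic points on $\XDt$ and check that none of them lies outside these harmless classes.

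The core step is this enumeration. I would first produce an explicit model of $\XDt$, compute its genus, and decompose its Jacobian $\JDt$ into isogeny factors attached to newforms of level $121$, exploiting the Atkin--Lehner involution to pass to the quotient $\XpDt$ (and its relation to $\Xop$) to help identify the factors. After computing $\rank\JDt(\QQ)$ and verifying that it is small enough for symmetric Chabauty to apply, I would run Siksek's symmetric Chabauty method on the symmetric square $\XDt^{(2)}$ together with a Mordell--Weil sieve at several primes of good reduction to obtain a complete list of effective degree-$2$ $\QQ$-divisors on $\XDt$, and hence of quadratic points. A routine case analysis then classifies each such point as cuspidal, CM, defined over $\QQ(\sqrt{-11})$, or arising from an actual $K$-rational $11$-isogeny, none of which yields an exceptional pair.

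The main obstacle is the Chabauty/sieve stage: one needs the rank of $\JDt(\QQ)$ (or of an appropriate Chabauty-accessible quotient) to satisfy the inequality required for symmetric Chabauty on $\XDt^{(2)}$, and enough known low-degree points (cusps, CM points, pull-backs from $\XpDt$) to seed the sieve. If the direct rank condition fails, one would bound quadratic points using $\XDt$ and $\XpDt$ in tandem, leveraging the quotient map to trade rank against genus. Assuming these steps succeed, the final non-exceptionality check for each enumerated point is short and reduces to analysing $\rho_{E,11}$ for the corresponding elliptic curve $E/K$.
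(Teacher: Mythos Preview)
Your proposal is correct and follows essentially the same approach as the paper: split on whether $\sqrt{-11}\in K$, reduce via Sutherland's theorem to quadratic points on $\XDt$, determine these by (relative) symmetric Chabauty plus a Mordell--Weil sieve exploiting the quotient $\XpDt$, and then verify none yields an exceptional pair. The only minor difference is in the final verification: the paper observes that all quadratic points are pullbacks from $C(\QQ)$, so the associated $j$-invariants are rational, and then checks via the classical modular polynomial $\Phi_{11}$ (together with \cite[Proposition~10.1]{BanwaitCremona14}) that each such $j$ already admits a rational $11$-isogeny, whereas you propose a direct case-by-case classification (cuspidal/CM/over $\QQ(\sqrt{-11})$/genuine isogeny)---both routes work.
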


Theorem \ref{thm:main_local_global} is a consequence of Theorem \ref{thm:quadratic_points_XDt} and \cite[Proposition 10.1]{BanwaitCremona14}.

\begin{theorem}\label{thm:quadratic_points_XDt}
The quadratic points of the modular curve $\XDt$ with respect to the model \eqref{eq:XDt} are listed in Table \ref{table:quadratic_points} (up to conjugation).
\end{theorem}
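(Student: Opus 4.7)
The plan is to determine every quadratic point on $\XDt$ by combining a classification of the ``obvious'' ones with a symmetric Chabauty argument on the symmetric square $\XDt^{(2)}$, and to clean up the remaining residue discs via a Mordell--Weil sieve on $\JDt$.

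First I would enumerate the sources of quadratic points and match them against Table \ref{table:quadratic_points}. These are: (i) the rational points of $\XDt$, each giving a degenerate quadratic divisor; (ii) the pullback of $\Xop(\QQ)$ under the natural degree-$2$ map $\XDt \to \Xop$ induced by the inclusion of $D_{10}$ into the normaliser of a split Cartan in $\PGL_2(\Fel)$, together with pullbacks under any other degree-$2$ map from $\XDt$ to a curve of genus $\leq 1$ with known Mordell--Weil group; and (iii) any sporadic quadratic points detected by a height search on the model \eqref{eq:XDt}. The assertion of the theorem is that these sources account for every entry of Table \ref{table:quadratic_points}.

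Second, I would analyse $\JDt$. The map $\XDt\to\Xop$ cuts out an isogeny factor $\Jop$ in $\JDt$, and I would identify the complementary factor through the newforms of appropriate level and character. The key arithmetic input is a certified inequality $\rank\JDt(\QQ) < g(\XDt)$, which can be obtained by combining analytic rank computations on each isogeny factor (invoking Gross--Zagier/Kolyvagin when the analytic rank is $\leq 1$) with a $2$-descent on any factor where the analytic approach does not suffice. Once this bound is in hand, symmetric Chabauty is available in principle.

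Third, I would run symmetric Chabauty at one or more primes $p$ of good reduction. For each $\mathbb{F}_p$-point of $\XDt^{(2)}$ I would compute the $p$-adic integrals of the subspace of regular differentials on $\XDt_{\QQ_p}$ that annihilates the image of $\JDt(\QQ)$ in $H^0(\XDt_{\QQ_p},\Omega^1)^\vee$. On residue discs where these integrals have only the expected zeros no new quadratic points appear; on discs where they vanish identically I would switch to a relative Chabauty for $\XDt\to\Xop$, replacing $\JDt$ by the complement of $\Jop$, and eliminate the remaining discs with a Mordell--Weil sieve modulo several auxiliary primes.

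The main obstacle will be precisely the residue discs on which ordinary symmetric Chabauty is inconclusive: these necessarily contain every pullback of a rational point of $\Xop$, and any positive-rank abelian subvariety of $\JDt$ factoring through a low-degree map to an elliptic curve can produce further problematic discs. Making this work requires a careful choice of auxiliary primes and an explicit presentation of $\JDt(\QQ)/\Tor$ that simultaneously kills all such discs while preserving the sporadic quadratic points of the table; controlling the image of $\XDt(K)$ in $\JDt(K)/\Tor$ for all quadratic $K$ at once is likely to be the technically most delicate part of the proof.
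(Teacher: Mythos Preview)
Your strategy is essentially the one the paper carries out: all quadratic points arise as pullbacks under the degree-$2$ map $\phiXDt\colon\XDt\to C$ with $C\simeq\Xop$, and this is proved by relative symmetric Chabauty together with a Mordell--Weil sieve.

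A few points where the paper's execution sharpens or differs from your outline. First, the quotient $C=\Xop$ has genus $2$, not $\le 1$; one must first compute $C(\QQ)$ (six points) by ordinary Chabauty--Coleman, using $\rank J_C(\QQ)=1<2$. Second, rather than analytic ranks on isogeny factors of $\JDt$, the paper exploits that $\XDt$ is a twist of $X_0(121)$ over $L=\QQ(\sqrt{-11})$: the decomposition of $J_0(121)$ into elliptic curves over $\QQ$ yields $\rank\JDt(L)=\rank J_0(L)=2=\rank J_C(L)$, so the complementary abelian variety $A$ in $\JDt\sim J_C\times A$ has $\rank A(L)=0$, hence $\rank A(\QQ)=0$, and therefore $\rank\JDt(\QQ)=\rank J_C(\QQ)=1$. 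Third---and this is the simplification you do not isolate---this rank equality forces the \emph{entire} image $W$ of $1-w_{11}^*$ on $\Omega_{\XDt/\QQ}$ to annihilate $\JDt(\QQ)$, and $W$ automatically lies in the kernel of the trace to $C$; so only the relative criterion (the paper's Theorem~\ref{thm:pullback_points_in_residue_class}) is ever needed, and one never has to carve out a proper subspace of the annihilator or worry about discs where the integrals vanish identically for structural reasons. Finally, $\cL^\prime=\emptyset$: there are no rational points and no sporadic quadratic points, and the sieve runs with $N=10$ over the primes $5,7,13,17,19,23$ against the explicit finite-index subgroup $G=\langle D_1,D_2\rangle\subset\JDt(\QQ)$.
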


The key ingredients in the proof of Theorem \ref{thm:quadratic_points_XDt} are the development of the (relative) symmetric Chabauty method and the Mordell-Weil sieve \cite{Siksek09b, Box21a, BoxGajovicGoodman22}, the existence of Assaf's implementations of the space of modular forms for an arbitrary congruence subgroup $\Gamma$ \cite{Assaf21}, the LMFDB modular curves database \cite{LMFDB} and the fact that its quotient curve $\XpDt$ is a genus $2$ curve such that $\rank(J_{X_{D_{10}}}(K))=\rank(J_{X^+_{D_{10}}}(K))$ for $K=\QQ,\QQ(\sqrt{-11})$.

\begin{center}
\begin{table}
\begin{tabular}{c  c}
    Name & Coordinates \\ \hline\hline \vspace{1ex}
    $P_1$ &  $(-3/4, 1/4, 0 , \frac{\sqrt{77}}{2}, 0, 1)$ \\
    $P_2$ & $(3/4, -5/4, 0, \frac{\sqrt{77}}{2}, 0, 1)$ \\
    $P_3$ & $(1, 1, 1, \sqrt{-11}, \sqrt{-11}, 1)$ \\
    $P_4$ & $(-1/3, 0, -1/3, \frac{\sqrt{22}}{3}, \frac{\sqrt{22}}{3}, 1)$ \\
    $P_5$ & $(-2/5, 2/5, 1/5, \frac{\sqrt{209}}{5}, -\frac{\sqrt{209}}{5}, 1)$ \\
    $P_6$ & $(-1, 7, 5, \sqrt{473}, -\sqrt{473}, 1)$
\end{tabular}
\caption{The quadratic points of $\XDt$ up to conjugation.}\label{table:quadratic_points}
\end{table}
\end{center}

A number of steps in the proofs were verified computationally using the Magma computer algebra system \cite{Magma}. In our computations we used Magma V2.28-23. We rely on the Modular forms package by Eran Assaf \cite{Assaf21} \url{https://github.com/assaferan/ModFrmGL2} and code by Samir Siksek \cite{Siksekcnf} which is available on \url{https://github.com/samirsiksek/siksek.github.io/tree/main/progs/chabnf}. All of our computations were done on a machine running Ubuntu 22.04.1 with an 2 Intel Xeon E5-2660, 8 core CPUs at 2.2/3.0 GHz, 64 GB RAM. Most computations finish relatively quickly. Only computing the isomorphism between the model computed by Assaf's code and our own simplified equation might take more than an hour. The code used in this paper is available on \url{https://github.com/akoutsianas/local_global_isogenies}. In the paper we will clearly indicate whenever we rely on Magma. Instructions on how to reproduce the steps can be found in the repository.

\section{Acknowledgments}

The second and third authors would like to thank the first author and Charles University for their hospitality when this collaboration took place in Prague. The first and the third author want to thank University of Groningen for their hospitality when they collaborated in Groningen. The first author was supported by Czech Science Foundation GAČR, grant 21-00420M, Junior Fund grant for postdoctoral positions at Charles University and by the MPIM guest postdoctoral fellowship program during various stages of this project. The second author is supported by MaRDI, funded by the Deutsche Forschungsgemeinschaft (DFG), project number 460135501, NFDI 29/1. The third author is supported by Special Account for Research Funds AUTH research grant \textit{``Solving Diophantine Equations-10349''}. Moreover, the third author wants to thank Professor Imin Chen for providing access to the servers of the Mathematics Department of Simon Fraser University where the code was written and tested. We want to thank Steffen M\"uller and Lazar Radi\v{c}evi\'{c} for useful conversations about the project.

\section{The modular curve $\XDt$}\label{sec:X10}

Let $N$ be a positive integer. Suppose $G$ is a subgroup of $\GL_2(\ZZ/N\ZZ)$ and $\Gamma_G=\{A\in\SL_2(\ZZ):A\pmod{N}\in G_0\}$. Because $\Gamma_G \supset \Gamma(N)$ we have that $\Gamma_G$ is a congruence subgroup of $\SL_2(\ZZ)$. We denote by $X_G$ the modular curve that parametrizes elliptic curves over $\CC$ whose residual representation modulo $N$ lies in $G$ up to conjugation. The curve $X_G$ has a model defined over $\QQ(\zeta_N)^{\det(G)}$ where $\det: G\rightarrow (\ZZ/N\ZZ)^*$ is the determinant map. Over $\CC$ the curve $X_G$ is a compact Riemann surface and it holds $X_G(\CC)\simeq\Gamma_G\setminus\HH^*$.


Let $D_{10}\subset\PGL_2(\Fel)$, $G_{D_{10}}$ the pullback of $D_{10}$ to $\GL_2(\Fel)$ and $\Gamma_{D_{10}}:=\Gamma_{G_{D_{10}}}$. We define the modular curve\footnote{The curve $\XDt$ has label \href{https://beta.lmfdb.org/ModularCurve/Q/11.132.6.b.1/}{11.132.6.b.1} in LMFDB beta version \cite{LMFDB}.} $\XDt:=X_{\GDt}$ which is, from the above, defined over $\QQ$ because $\det(G)=\Fel^*$ \cite[Proposition 3]{Sutherland12}.


In \cite{GalbraithThesis} Galbraith described a method to compute a model of a modular curve $X(\Gamma)$ for a congruence subgroup $\Gamma$ as long as we are able to compute a basis of $S_2(\Gamma)$. Galbraith's ideas have been used and extended by Banwait and Cremona\footnote{The authors determine a model for the modular curve $X_{S_4}(13)$.} \cite{BanwaitCremona14, BanwaitThesis}, Zywina \cite{Zywina21} and Box \cite{Box21b}. Moreover, in \cite{Assaf21} Assaf describes a general method of computing $S_2(\Gamma)$ and the algorithm has been implemented in Magma \cite{Magma}.

In our case we use Assaf's algorithm and his implementation to compute $S_2(\GDt)$. We pick an explicit subgroup $H$ of $\PGL_2(\Fel)$ isomorphic to $D_{10}$; in particular, the one that is generated by the following matrices
\begin{equation*}
    A=\begin{pmatrix}
        4 & 0 \\
        0 & 3
    \end{pmatrix},
    \qquad
    B=\begin{pmatrix}
        0 & 1 \\
        1 & 0
    \end{pmatrix}.
\end{equation*}
With the terminology of \cite[Definition 1.2.1]{Assaf21} the group $\GDt$ is of \textit{real type}. Assaf's implementation computes the space $S_2(\GDt)$ which has dimension $6$. This implies that the genus of $\XDt$ is $6$. We note that one can compute the genus of $\XDt$ using known formulas, for example, in \cite[Theorem 3.1.1]{DiamondShurman05}.

\begin{remark} A different choice of the generators of $D_{10}$ gives an isomorphic space of newforms which will not affect our computations of a model of $\XDt$.
\end{remark}

The model we get for $\XDt$ by Assaf's implementation is given by equations with big coefficients. However, we know that $\XDt$ is isomorphic to $X_0(121)$ over $L \coloneqq \QQ(\sqrt{-11})$ \cite[Lemma 3.3.7]{BanwaitThesis}. A model of $X_0(121)$ is given in Galbraith's thesis \cite[p. 36]{GalbraithThesis} which we recall below.

\begin{equation}\label{eq:X0121_model}
X_0(121):~
\begin{cases}
& uw-2vw+2ux-6vx+2uy+2vy+uz = 0,\\[1.5ex]
& uw + vw + 2ux - 2vx + 2uy - 10vy - 5uz + 11vz = 0,\\[1.5ex] 
& -6u^2 + 6uv - 3v^2 -w^2 + 6wx - x^2 - 8wy + 10xy \\
& - 9y^2 - 4wz + 10xz = 0,\\[1.5ex]
& 6u^2 + 12uv + 12v^2 - 17wx - 2x^2 - 5wy + 4xy + 14y^2 \\
& - 6wz - 7xz - 11yz = 0,\\[1.5ex]
& -9v^2 - 8w^2 + wx + 9x^2 + 7wy - 10xy + y^2 - 7wz + 27xz - 11yz = 0,\\[1.5ex]
& -6u^2 - 12uv - 12v^2 - 3w^2 + 7wx - 6x^2 + 11wy + 12xy + 10y^2 + 4wz\\
& +17xz - 11yz - 11z^2=0.
\end{cases}
\end{equation}

We search for a model of $\XDt$ so that it is isomorphic to $X_0(121)$ over $L$ but not over $\QQ$. Simultaneously, we test if it is isomorphic to the model obtained by Assaf's implementation using \textit{IsIsomorphic} in Magma \cite{Magma}. So, at the end we obtain the following model:

\begin{equation}\label{eq:XDt}
\XDt:
\begin{cases}
~& uw - 2vw + 2ux - 6vx + 2uy + 2vy + uz = 0,\\[1.5ex]
& uw + vw + 2ux - 2vx + 2uy - 10vy - 5uz + 11vz = 0,\\[1.5ex]
& - 6u^2 + 6uv - 3v^2 + 11w^2 - 66wx + 11x^2 + 88wy  - 110xy + 99y^2 \\
& + 44wz - 110xz = 0,\\[1.5ex]
& 6u^2 + 12uv + 12v^2 + 187wx + 22x^2 + 55wy - 44xy - 154y^2 + 66wz \\
& + 77xz  + 121yz  = 0,\\[1.5ex]
& - 9v^2 + 88w^2- 11wx -99x^2 - 77wy + 110xy - 11y^2 + 77wz - 297xz \\
& + 121yz    = 0,\\[1.5ex]
& - 6u^2 - 12uv - 12v^2 + 33w^2 - 77wx + 66x^2 - 121wy - 132xy - 110y^2 \\
& - 44wz - 187xz   + 121yz  + 121z^2 = 0
\end{cases}
\end{equation}


The isomorphism $\phi$ over $L$ between the two models of $\XDt$ and $X_0(121)$ is given explicitly, 
\begin{align}\label{eq:phi-def}
\begin{split}
    \phi : ~X_0(121) & \to \XDt,\\
    [x:y:z:u:v:w] & \mapsto [x:y:z:\sqrt{-11}u:\sqrt{-11}v:w].
\end{split}
\end{align}

Let $w_{121}$ be the Atkin-Lehner involution of $X_0(121)$. We define 
\begin{equation}\label{eq:w11_definition}
w_{11}:=\phi\circ w_{121}\circ\phi^{-1}.
\end{equation}
We define $X^+_0(121):=X_0(121)/w_{121}$ and $\XpDt:=\XDt/w_{11}$.

\begin{lemma}\label{lemma:AtkinLehner}
The Atkin-Lehner involution $w_{121}$ on $X_0(121)$ and the involution $w_{11}$ on $\XDt$ are both defined over $\QQ$ and are given by $[x:y:z:u:v:w]\mapsto [x:y:z:-u:-v:w]$ with respect to the models defined by the equations given above.
\end{lemma}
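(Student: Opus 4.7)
The plan is to exhibit a single projective automorphism of $\PP^5$, namely $\sigma\colon[x:y:z:u:v:w]\mapsto[x:y:z:-u:-v:w]$, that simultaneously realises $w_{121}$ on Galbraith's model of $X_0(121)$ and $w_{11}$ on $\XDt$, and that is manifestly defined over $\QQ$.

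First I would check that $\sigma$ preserves each defining equation of both models \eqref{eq:X0121_model} and \eqref{eq:XDt}. Inspection shows that every monomial in each equation has a fixed parity of total degree in $\{u,v\}$: equations 1 and 2 are odd (each term carries exactly one $u$ or $v$), while equations 3--6 are even (each term is either free of $u,v$ or is of total degree two in them). Hence $\sigma$ fixes each equation or multiplies it by $-1$, in either case preserving the zero locus, so $\sigma$ induces an involution on each curve, defined over $\QQ$ because its coefficients are rational.

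Next I would identify $\sigma$ with the Atkin-Lehner involution $w_{121}$ on $X_0(121)$. Galbraith's canonical model \eqref{eq:X0121_model} from \cite{GalbraithThesis} is built from a basis of $S_2(\Gamma_0(121))$ chosen so as to diagonalise $w_{121}$: the coordinates $u,v$ correspond to eigenforms with one Atkin-Lehner eigenvalue and $x,y,z,w$ to eigenforms with the opposite eigenvalue. Granted this, the induced action on $\PP^5$ is precisely $\sigma$. I would verify the eigenbasis property directly by computing the matrix of $w_{121}$ on Galbraith's basis using Assaf's implementation \cite{Assaf21} in \textsf{Magma}.

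Finally, by \eqref{eq:w11_definition} we have $w_{11}=\phi\circ w_{121}\circ\phi^{-1}$, and from \eqref{eq:phi-def} the map $\phi$ acts trivially on $x,y,z,w$ and scales $u,v$ by $\sqrt{-11}$. Since scalar multiplication commutes with negation, $\phi\circ\sigma\circ\phi^{-1}=\sigma$ as self-maps of $\PP^5$; combined with the previous step, this yields $w_{11}=\sigma$ on $\XDt$, manifestly defined over $\QQ$ and with the claimed formula. The one genuinely non-trivial point is the identification $w_{121}=\sigma$, which rests on Galbraith's basis being a $w_{121}$-eigenbasis with the stated block structure; this is exactly what the \textsf{Magma} computation is there to provide. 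Steps 1 and 3 are entirely mechanical.
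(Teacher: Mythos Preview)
Your argument is correct, but the paper takes a different route for the identification of $\sigma$ with $w_{121}$. Where you propose to verify directly (via \textsf{Magma}) that Galbraith's coordinates form a $w_{121}$-eigenbasis with the appropriate $(+,+,+,-,-,+)$ block structure, the paper instead invokes the classification of automorphisms of $X_0(121)$: by \cite{KenkuMomose88}, \cite{AtkinLehner70} and \cite{Bars08} one has $\Aut_{\QQ}(X_0(121))=\langle w_{121}\rangle\simeq C_2$, so \emph{any} nontrivial $\QQ$-rational automorphism of the model must be $w_{121}$. Since your parity check already shows $\sigma$ is such an automorphism, this pins it down without examining Galbraith's basis at all. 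Your Step~3 and the paper's treatment of $w_{11}$ coincide. The trade-off: your approach is self-contained and explains \emph{why} $\sigma$ has this particular shape (it reflects the Atkin--Lehner eigenspace decomposition on cusp forms), at the cost of a small computation; the paper's approach is computation-free but imports three external references on automorphism groups of modular curves.
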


\begin{proof}
We know that $w_{121}$ is defined over $\QQ$. From \cite[Theorem 0.1]{KenkuMomose88}, \cite[Theorem 8]{AtkinLehner70} and \cite[Proposition]{Bars08}  we get that $\Aut_{\QQ}(X_0(121))=\langle w_{121}\rangle\simeq C_2$. Finally, the statement for $w_{11}$ is clear from its definition.
\end{proof}

Then\footnote{We recall that $X^+_0(121)\simeq X_{sp}^+(11)$.} $C:=X^+_0(121)$ has genus $g(C)=2$, hence it is a hyperelliptic curve given by the equation
\begin{equation}\label{eq:X0p}
    C:~y^2 = x^6 - 6x^5 + 11x^4 - 8x^3 + 11x^2 - 6x + 1.
\end{equation}

We also have explicitly computed the quotient maps $\phiXo:~X_0(121) \rightarrow X^+_0(121)$ and $\phiXDt:~\XDt \rightarrow\XpDt$ which can be found in our GitHub repository.

\begin{remark}
In the computations in Section \ref{sec:Mordell_Weil_group} we use the fact that both $\phiXDt$ and $\XpDt$ are defined over $\QQ$.
\end{remark}

\begin{lemma}
The curves $X_0^+(121)$ and $\XpDt$ are isomorphic over $\QQ$.
\end{lemma}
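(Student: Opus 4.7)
The plan is a short Galois descent. The isomorphism $\phi$ defined in \eqref{eq:phi-def} is only a priori an $L$-isomorphism ($L = \QQ(\sqrt{-11})$), but by construction $w_{11} = \phi \circ w_{121} \circ \phi^{-1}$, so $\phi$ intertwines the two Atkin--Lehner involutions. Consequently $\phi$ descends to an $L$-isomorphism $\bar{\phi} : X_0^+(121)_L \to (\XpDt)_L$. I would then argue that $\bar{\phi}$ is already defined over $\QQ$.

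To verify this, let $\sigma$ denote the nontrivial element of $\Gal(L/\QQ)$ and compute the Galois conjugate $\phi^{\sigma}$. Reading off \eqref{eq:phi-def} directly,
\[
\phi^{\sigma}([x:y:z:u:v:w]) = [x:y:z:-\sqrt{-11}\,u:-\sqrt{-11}\,v:w],
\]
and by Lemma~\ref{lemma:AtkinLehner} this coincides with $\phi \circ w_{121}$. Passing to the quotient by $w_{121}$ on the source, the induced map $\overline{\phi^{\sigma}}$ equals $\bar{\phi} \circ \overline{w_{121}} = \bar{\phi}$, since $w_{121}$ acts trivially on $X_0^+(121)$. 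Hence $\bar{\phi}$ is Galois-invariant, and therefore defined over $\QQ$.

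There is no real obstacle here: the key point is that the twist obstructing $\phi$ from being defined over $\QQ$ is exactly the involution being quotiented out, so it disappears on the quotient. An alternative, more hands-on verification would be to write both quotient maps $\phiXo$ and $\phiXDt$ in coordinates built from the $w$-invariants $x,y,z,w,u^2,uv,v^2$, observe that $\phi$ sends $(u^2,uv,v^2) \mapsto -11\cdot(u^2,uv,v^2)$ and fixes $x,y,z,w$, and read off the resulting $\QQ$-rational isomorphism (indeed, one could then identify the target with the explicit hyperelliptic model \eqref{eq:X0p} of $C$). The descent argument above, however, avoids any computation and uses only the formulas already established.
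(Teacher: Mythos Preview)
Your proposal is correct and follows essentially the same Galois descent argument as the paper: both compute $\phi^{\sigma}=\phi\circ w_{121}$ from the explicit formulas and conclude that the induced map on the quotients is $\Gal(L/\QQ)$-invariant, hence defined over $\QQ$. The paper additionally invokes that $\phiXo$ and $\phiXDt$ are defined over $\QQ$, which amounts to the fact that the quotient curves carry $\QQ$-structures, a point you use implicitly.
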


\begin{proof}
We know that $X_0(121)$ and $\XDt$ are isomorphic over $L$ under the isomorphism $\phi$ above. From \eqref{eq:phi-def} and \eqref{eq:w11_definition} we observe that 
\begin{equation*}
\phi^{\sigma}=
\begin{cases}
    \phi, & \sigma\mid_{L}=\id,\\
    \phi\circ w_{121} = w_{11}\circ\phi, & \sigma\mid_{L}\neq\id,
\end{cases}
\end{equation*}
where $\phi^\sigma:=\sigma^{-1}\circ\phi\circ\sigma$ and $\sigma\in \Gal(\bar{\QQ}/\QQ)$. Together with the fact that $\phiXo$ and $\phiXDt$ are defined over $\QQ$ this implies that, when we take quotients by $w_{121}$, and $w_{11}$, the curves $X_0^+(121)$ and $\XpDt$ become isomorphic over $\QQ$.
\end{proof}

By abusing notation, let $\phiXDt$ denote the composition of $\phiXDt$ and the isomorphism between $\XpDt\simeq C$ then we have the commutative diagram
\begin{equation}\label{eq:diagram}
\begin{tikzcd}
X_0(121) \arrow{rr}{\phi}\arrow{rd}{\phiXo} & & \XDt \arrow{ld}{\phiXDt} \\
& C & 
\end{tikzcd}
\end{equation}
where every map has been computed explicitly.

\section{The Mordell-Weil group of $\JDt$}\label{sec:Mordell_Weil_group}

We denote by $J_0$, $\JDt$ and $J_C$ the Jacobians of $X_0(121)$, $\XDt$ and $C$, respectively. It holds that 
\begin{equation}\label{eq:J121_isogeny}
J_0\sim E_{f_1}\oplus E_{f_2}\oplus E_{f_3}\oplus E_{f_4}\oplus E^2_{f_5},
\end{equation}
where $f_i$ for $i=1,2,3,4$ are the four rational newforms of level $121$, the ordering is according to the LMFDB, and $f_5$ is the unique rational newform of level $11$. By modularity, $f_i$ corresponds to the elliptic curve $E_{f_i}$ over $\QQ$ for each $i$. It holds that $\rank_{\QQ}(E_{f_2})=1$ and $\rank_{\QQ}(E_{f_i})=0$ for $i\neq 2$. Therefore, $\rank_{\QQ}(J_0)=1$. 

Moreover, over $L$ we have $\rank_{L}(E_{f_i})=0$ for $i\neq 2$ and $\rank_{L}(E_{f_2})=2$. Because the isogeny in \eqref{eq:J121_isogeny} is defined over $\QQ$, we get $\rank_{L}(J_0)=\rank_{L}(\JDt)=2$.

The curve $C$ has two points at infinity with $\infty=(1,1,0)$ and $(-\infty) = (1,-1,0)$ in the weighted projective plane $\PP^2(1,3,1)$. Using the implementation of the method \cite{Stoll01} in Magma, we prove that $J_C(\QQ)\simeq \ZZ/5\ZZ\times \ZZ$ with generators $G_1 = \left[(0,-1) - (-\infty)\right]$ and $G_2 = \left[\infty - (-\infty)\right]$ of order $5$ and infinite, respectively. Since Chabauty's rank condition is satisfied for $C$ ($\rank(J_C(\QQ))=1<2=g(C)$), we use the implementation of the method of Chabauty and Coleman in Magma to compute $C(\QQ)$.  

\begin{proposition}
We have that $C(\QQ)=\{(1, \pm 2), (0, \pm 1), \pm\infty\}$.
\end{proposition}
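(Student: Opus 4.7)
The plan is to apply the Chabauty--Coleman method directly, which is available because the rank condition $\rank_{\QQ}(J_C) = 1 < 2 = g(C)$ is satisfied and an explicit generator of the free part of $J_C(\QQ)$ is in hand. First I would verify that each of the six listed points lies on $C$: at $x=0$ the sextic evaluates to $1$ and at $x=1$ it evaluates to $4$, producing the four affine points, while the leading coefficient of the sextic equals $1$, so both points $(1:\pm 1:0)$ at infinity of the weighted projective model are $\QQ$-rational. The remaining task is to prove that no further $\QQ$-points exist.

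Next, I would choose a small prime $p$ of good reduction and use the infinite-order generator $G_2 = [\infty - (-\infty)]$ to single out, up to scalar, the unique regular differential $\omega \in H^0(C_{\QQ_p}, \Omega^1)$ whose Coleman integral vanishes along $J_C(\QQ)$. For each residue disk of $C(\QQ_p)$---one per point of $C(\Fp)$---I would expand $\int_{-\infty}^{Q} \omega$ as a power series in a local uniformiser and bound the number of its zeros via the Newton polygon. Every $\QQ$-rational point of $C$ in a given disk is a zero of this integral, so matching the local counts against the six known rational points would close the argument.

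This is precisely the content of Magma's \texttt{Chabauty} routine for hyperelliptic Jacobians, following the implementations of Siksek, Stoll, and Bruin; the routine takes the infinite-order generator $G_2$ as input, while the $5$-torsion generator $G_1$ is only needed for the prior Mordell--Weil computation. The main obstacle one could encounter is a residue disk in which the Coleman integral vanishes to order $\geq 2$, producing a local bound too weak to exclude extraneous rational points; the standard remedy in that case is to combine Chabauty with a Mordell--Weil sieve using both $G_1$ and $G_2$ at several auxiliary primes. Given the small size of $C$ and of $J_C(\QQ)$, I expect a single well-chosen prime of good reduction to give sharp bounds in every residue disk and no auxiliary sieving to be necessary, matching the statement as formulated.
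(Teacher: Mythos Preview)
Your proposal is correct and matches the paper's approach: the paper simply records that the Chabauty rank condition $\rank(J_C(\QQ))=1<2=g(C)$ holds and invokes Magma's implementation of Chabauty--Coleman to obtain $C(\QQ)$. Your write-up supplies more detail on what that routine does internally, but the method is the same.
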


For the computations and the proofs that follow it is important to explicitly compute $\phiXDt^{-1}(C(\QQ))$. Using Magma we get
\begin{align*}
    \phiXDt^{-1}((1,-2)) & = \{P_1, \bar{P}_1\},\quad \phiXDt^{-1}((1,2)) = \{P_2, \bar{P}_2\}, \\
    \phiXDt^{-1}(-\infty) & = \{P_3, \bar{P}_3\}, \quad \phiXDt^{-1}(\infty) = \{P_4, \bar{P}_4\},\\
    \phiXDt^{-1}((0,-1)) & = \{P_5, \bar{P}_5\},\quad \phiXDt^{-1}((0,1)) = \{P_6, \bar{P}_6\},
\end{align*}
where $\bar{P}_i$ is the conjugate of $P_i$.

We will use the commutative diagram \eqref{eq:diagram} and the fact that $\XDt$ is isomorphic to $X_0(121)$ over $L$ in order to determine a finite index subgroup of $\JDt(\QQ)$.

The first step is to determine $J_C(L)$. Using the information of $J_C(L)$ and the fact that $\phiXDt$ is defined over $\QQ$ we are able to determine a finite index supgroup of $\JDt(\QQ)$.

\begin{proposition}\label{prop:JCK_mordellweil}
It holds that $J_C(L)_{\Tor}=\langle G_1\rangle$ and $\rank(J_C(L))=2$. In particular, the group generated by $\langle G_1, G_2, G_3\rangle$ is a finite index subgroup of $J_C(L)$ with
\begin{equation*}
    G_3 = \left[\left(\frac{-1 + \sqrt{-3}}{2}, -\sqrt{-11}\right)  + \left(\frac{-1 - \sqrt{-3}}{2}, -\sqrt{-11}\right) - \infty - (-\infty)\right].
\end{equation*}
\end{proposition}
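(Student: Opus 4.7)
The plan is to combine the $\QQ$-rational data $J_C(\QQ)\simeq\ZZ/5\ZZ\times\ZZ$ with the Galois action of $\Gal(L/\QQ)$ and the isogeny decomposition \eqref{eq:J121_isogeny} of $J_0$ in order to pin down both the torsion and the rank of $J_C(L)$.

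For the torsion part, the lower bound $\langle G_1\rangle\subseteq J_C(\QQ)_{\Tor}\subseteq J_C(L)_{\Tor}$ is automatic, and I would produce the matching upper bound by reduction. Since $C$ has good reduction away from $11$, for any prime $\fp$ of $L$ with odd residue characteristic distinct from $11$ the reduction map $J_C(L)_{\Tor}\hookrightarrow J_C(\Ffp)$ is injective; computing $\#J_C(\Ffp)$ for a handful of such primes $\fp$ (using the \textit{Magma} point-counting machinery) and taking the greatest common divisor should already give exactly $5$, which combined with the lower bound forces $J_C(L)_{\Tor}=\langle G_1\rangle$.

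For the rank, the upper bound $\rank_L J_C\leq 2$ is immediate from \eqref{eq:J121_isogeny}: $J_C$ is the quotient of $J_0$ by $(1-w_{121})J_0$ and is therefore an isogeny factor of $J_0$, which has $L$-rank $2$ according to the rank data of the newforms $f_1,\dots,f_5$. For the lower bound one needs two independent points. The point $G_2\in J_C(\QQ)$ is already of infinite order. The key observation for $G_3$ is that it lies in the $-1$-eigenspace of the nontrivial element $\sigma\in\Gal(L/\QQ)$: since $\sigma$ fixes $a=\tfrac{-1+\sqrt{-3}}{2}$ and $\bar a$ and sends $\sqrt{-11}\mapsto -\sqrt{-11}$, one has $\sigma((a,-\sqrt{-11}))=\iota((a,-\sqrt{-11}))$ (where $\iota$ is the hyperelliptic involution), and similarly for $(\bar a,-\sqrt{-11})$; using the standard relation $P+\iota(P)\sim\infty+(-\infty)$ on a genus $2$ hyperelliptic curve, a short computation yields $\sigma(G_3)=-G_3$. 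Because $G_2\in J_C(\QQ)$ lies in the $+1$-eigenspace of $\sigma$, the points $G_2$ and $G_3$ are automatically $\QQ$-linearly independent in $J_C(L)\otimes\QQ$ as soon as $G_3$ is not torsion.

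The last and only computationally substantive step is therefore to rule out $G_3\in J_C(L)_{\Tor}$. I would do this by reducing $G_3$ modulo a suitably chosen prime $\fp$ of $L$ of good reduction and verifying in \textit{Magma} that its image in $J_C(\Ffp)$ has order strictly greater than $5$. Putting everything together then gives $\rank J_C(L)=2$, and $\langle G_1,G_2,G_3\rangle$ is a finite-index subgroup of $J_C(L)$ as claimed. The main obstacle, conceptually, is not any single calculation but choosing the reduction primes so that both the torsion-bounding $\gcd$ and the order of $\red_\fp(G_3)$ come out as tight as possible; once a prime $\fp$ is found that simultaneously pins down both, the argument closes.
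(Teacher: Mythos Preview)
Your argument is correct and departs from the paper's in two places. For the torsion both proofs are identical. For the rank upper bound the paper runs $2$-descent over $L$ in Magma, whereas you read it off from the isogeny decomposition \eqref{eq:J121_isogeny} of $J_0$ together with $\rank_L(J_0)=2$: since $J_C$ is an isogeny factor of $J_0$ over $\QQ$, the bound $\rank_L(J_C)\leq 2$ is free. For the linear independence of $G_2$ and $G_3$ the paper invokes Siksek's reduction-based lattice criterion (showing that any relation $\lambda G_2+\mu G_3=0$ would force $(\lambda,\mu)\in p\ZZ^2$ for some prime $p\nmid 5$), while you observe that $G_2$ lies in the $+1$-eigenspace and $G_3$ in the $-1$-eigenspace of $\Gal(L/\QQ)$, so independence reduces to the single check that $G_3$ is nontorsion. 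Your eigenspace argument is more conceptual and tailored to this situation; the paper's route is more generic and plugs straight into existing code without needing any Galois symmetry. One small wording issue: ``$\sigma$ fixes $a$'' is not literally true since $a\notin L$, but this is harmless---what you actually use is that the effective divisor $(a,-\sqrt{-11})+(\bar a,-\sqrt{-11})$ is $L$-rational and that any lift of $\sigma$ to $\QQ(\sqrt{-3},\sqrt{-11})$ carries it to its hyperelliptic conjugate, which is exactly what your computation verifies.
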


\begin{proof}
By computing the order of the reduction of $J_C$ modulo good primes and using the fact that the torsion subgroup injects into these groups (in Magma) we get that $\# J_C(L)_{\Tor}\leq 5$. Because $\# J_C(\QQ)_{\Tor}=5$ and $J_C(\QQ)_{\Tor}\subset J_C(L)_{\Tor}$ we understand that $J_C(L)_{\Tor}=J_C(\QQ)_{\Tor}=\langle G_1\rangle$.

Using the implementation of 2-descent \cite{Stoll01} in Magma, we also get that $\rank(J_C(L))\leq 2$. The point $G_3$ is a point of infinite order with $G_3\not\in J_C(\QQ)$. We prove that $G_2$ and $G_3$ are linearly independent elements in $J_C(L)$ by using Siksek's code. The idea is the following; if $G_2, G_3$ are linear dependent then the pairs $(\lambda,\mu)\in\ZZ^2$ such that $\lambda G_1+\mu G_2=0$ is a subgroup $\Lambda$ of $\ZZ^2$. We can also assume that there exists a pair $(\lambda,\mu)$ such that $\lambda, \mu$ are coprime away $\#J_C(L)_{\Tor}$. Using the reduction of $J_C(L)$ modulo small primes we  prove that exist a prime $p\nmid \#J_C(L)_{\Tor}$ such that $\Lambda\subseteq p\ZZ^2$.
\end{proof}

\begin{proposition}\label{prop:finite-index-sbgp-of-J(Q)}
It holds that $\rank(\JDt(\QQ))=1$ and $\JDt(\QQ)_{\Tor}$ is isomorphic to $C_5$ or $C_5\times C_5$. In particular, the group $G=\langle D_1, D_2\rangle$ where 
\begin{align*}
    D_1 & = \left[P_6 + \bar{P}_6 - P_4 - \bar{P}_4\right],\\
    D_2 & = \left[P_3 + \bar{P}_3 - P_4 - \bar{P}_4\right],
\end{align*}
with $5D_1=0$ and $D_2$ has infinite order, is a finite index subgroup of $\JDt(\QQ)$ such that $10\JDt(\QQ)\subseteq G$.
\end{proposition}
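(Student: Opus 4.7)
The plan is to reduce the $\QQ$-Mordell--Weil computation to the $L$-rank already obtained in Proposition \ref{prop:JCK_mordellweil}, and then use the quotient map $\phiXDt$ to pin down both the torsion and the index of $\langle D_1, D_2 \rangle$. For the rank, I would exploit that $\JDt$ is a twist of $J_0 := J_0(121)$: by \eqref{eq:phi-def} and \eqref{eq:w11_definition} the isomorphism $\phi$ satisfies $\phi^\sigma = w_{11}\circ\phi$ for the non-trivial $\sigma \in \Gal(L/\QQ)$, so $\JDt$ is the twist of $J_0$ by the cocycle $\sigma \mapsto w_{121}$. Taking Atkin--Lehner eigenspaces and using the $\QQ$-isomorphism $\XpDt \simeq C$ already established, one obtains a $\QQ$-isogeny decomposition $\JDt \sim \JpDt \times (J_0^-)^\chi$, where $J_0^-$ is the $(-1)$-eigenspace of $w_{121}$ on $J_0$ and $(J_0^-)^\chi$ is its quadratic twist by $L/\QQ$. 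Now $\JpDt \simeq J_C$ has $\QQ$-rank $1$, while \eqref{eq:J121_isogeny} together with the $L$-ranks of the newform factors gives $\rank_L(J_0) = 2 = \rank_L(J_C)$; hence $\rank_L(J_0^-)=0$, and since $(J_0^-)^\chi \simeq J_0^-$ over $L$ one concludes $\rank_\QQ((J_0^-)^\chi) \le \rank_L((J_0^-)^\chi) = 0$, so $\rank(\JDt(\QQ)) = 1$.

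Next, both $D_1$ and $D_2$ are stable under complex conjugation by construction, hence lie in $\JDt(\QQ)$. Pushing forward by $\phiXDt$, using $\phiXDt(P_3)=\phiXDt(\bar P_3)=-\infty$, $\phiXDt(P_4)=\phiXDt(\bar P_4)=\infty$, $\phiXDt(P_6)=\phiXDt(\bar P_6)=(0,1)$, and the fact that the hyperelliptic involution acts as $-1$ on $J_C$ (sending $(0,-1)\mapsto(0,1)$ and $-\infty\mapsto\infty$), one computes
\[
(\phiXDt)_*(D_1) = 2[(0,1)-\infty] = -2G_1, \qquad (\phiXDt)_*(D_2) = 2[(-\infty)-\infty] = -2G_2.
\]
Since $G_1$ has order $5$ and $G_2$ has infinite order, the order of $D_1$ in $\JDt(\QQ)$ is divisible by $5$ and $D_2$ has infinite order. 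The equality $5D_1=0$ itself I would certify directly in Magma by producing a rational function on $\XDt$ with divisor $5D_1$, using Riemann--Roch on the model \eqref{eq:XDt}.

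For the torsion upper bound I would use the injection $\JDt(\QQ)_\Tor \hookrightarrow \JDt(\Ffp)$ at several primes $p\neq 11$ of good reduction (say $p=2,3,7,13$), compute $\#\JDt(\Ffp)$ and its group structure in Magma, and take greatest common divisors to force $\#\JDt(\QQ)_\Tor \mid 25$; displaying the exponent as $5$ at one such prime rules out $C_{25}$, leaving only $C_5$ and $C_5\times C_5$.

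Finally, for the index claim, write $\JDt(\QQ) = T \oplus \ZZ\cdot R$ with $T$ of exponent $5$ and $R$ a generator of the free part, and decompose $D_2 = t_0 + cR$ with $t_0\in T$ and $c\in\ZZ$. Since the image of $R$ in $J_C(\QQ)/J_C(\QQ)_\Tor$ is an integer multiple $aG_2$ and $(\phiXDt)_*(D_2) = -2G_2$, we obtain $ca=-2$, so $c\in\{\pm 1,\pm 2\}$. Then $(10/c)D_2 = (10/c)t_0 + 10R = 10R$, because $c\mid 2$ forces $5\mid 10/c$ and hence $(10/c)t_0=0$; thus $10R\in\langle D_2\rangle \subseteq G$, and combined with $10T=0$ this yields $10\JDt(\QQ)\subseteq G$. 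The main obstacle in this plan is the explicit Magma verification that $5D_1=0$ on the genus-$6$ model \eqref{eq:XDt}; every other step reduces to the twist argument or to bookkeeping with the rational points already at hand.
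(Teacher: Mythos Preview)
Your argument is correct, and the overall strategy matches the paper's, but there are two points of divergence worth noting.

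First, your twist decomposition $\JDt \sim \JpDt \times (J_0^-)^\chi$ is a valid route to the rank, but the paper takes a shorter path: the quotient $\phiXDt$ alone gives a $\QQ$-isogeny $\JDt \sim J_C \times A$, and since $\rank_L(\JDt)=\rank_L(J_C)=2$ one gets $\rank_L(A)=0$, hence $\rank_\QQ(A)=0$, without ever identifying $A$ as a twist of $J_0^-$.

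Second, you are working harder than necessary on $5D_1=0$. You compute the pushforward $(\phiXDt)_*(D_1)=-2G_1$, but in fact $P_6+\bar P_6=\phiXDt^*((0,1))$ and $P_4+\bar P_4=\phiXDt^*(\infty)$ as divisors, so $D_1=\phiXDt^*\!\left([(0,1)-\infty]\right)=\phiXDt^*(-G_1)$; then $5D_1=\phiXDt^*(-5G_1)=0$ immediately, and likewise $D_2=\phiXDt^*(-G_2)$ has infinite order. No Riemann--Roch computation on the genus-$6$ model is needed. This is exactly how the paper proceeds (up to an irrelevant sign).

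Where you gain over the paper is in the index bound: the paper simply invokes \cite[Proposition~3.1]{Box21a} for $10\JDt(\QQ)\subseteq G$, whereas your direct argument via $ca=-2$ and the exponent-$5$ torsion is self-contained and transparent. That part is a genuine improvement in exposition.
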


\begin{proof}
As in Proposition \ref{prop:JCK_mordellweil} we prove that $\#\JDt(\QQ)_{\Tor}\mid 25$. At the same time we also prove that $\JDt(\QQ)_{\Tor}\not\simeq C_{25}$, therefore the torsion part of $\JDt(\QQ)$ is isomorphic to a subgroup of $C_5\times C_5$. We have that $\phiXDt^*$ injects $J_C(\QQ)_{\Tor}$ into $\JDt(\QQ)_{\Tor}$ because $(\deg(\phiXDt), \#\JDt(\QQ)_{\Tor})=1$. Since, $J_C(\QQ)_{\Tor}\simeq C_5$ we understand that $\JDt(\QQ)_{\Tor}\simeq C_5$ or $C_5\times C_5$. Let $D_1=\phiXDt^*(G_1)$ then it holds $5D_1=0$.

The quotient map $\phiXDt\colon \XDt\longrightarrow C$ induces an isogeny $\JDt\sim J_C\times A$ where $A/\QQ$ is some abelian variety and the isogeny is defined over $\QQ$. Since $\rank(\JDt(L))=\rank (J_C(L))=2$, we have $\rank(A(L))=0$, so $\rank(A(\QQ))=0$. Hence, $\rank(\JDt(\QQ))=\rank(J_C(\QQ))=1$. 

Let $D_2 = \phiXDt^*(G_2)\in \JDt(\QQ)$, then $D_2$ has an infinite order in $\JDt(\QQ)$ because $G_2$ has an infinite order in $J_C(\QQ)$. Finally, from the above and  \cite[Proposition 3.1]{Box21a} we have that $10\JDt(\QQ)\subseteq G$.
\end{proof}

\section{Symmetric Chabauty}\label{sec:symmetric_chabauty}

In this section we apply the relative symmetric Chabauty as it is described in \cite{Siksek09b, Box21a, BoxGajovicGoodman22}. We also give a brief exposition of the idea of symmetric Chabauty following \cite[Section 2.2]{Box21a}.

\subsection{Introduction}
Let $X/\QQ$ be a smooth projective curve of genus $g$ that has good reduction at a prime $p>2$. Let $J$ be the Jacobian of $X$ and $r$ the rank of $J(\QQ)$. We also assume that $r\leq g-2$, i.e., the Chabauty condition holds for quadratic points. We denote by $\cX$ the proper minimal regular model of $X$. In the space of global differential forms $\Omega_{X/\QQ_p}(X)$, we have the $\ZZ_p$-submodule $\Omega_{\mathcal{X}/\ZZ_p}(\mathcal{X})$. In \cite{Coleman85} Coleman defines the pairing
\begin{equation*}
    \Omega_{X/\QQ_p}(X)\times J(\QQ_p)\rightarrow \QQ_p, \quad \left(\omega, \left[\sum_i P_i - Q_i\right]\right)\mapsto \sum_{i}\int_{Q_i}^{P_i}\omega.
\end{equation*}
We denote by $V$ the annihilator of $J(\QQ)$ under the above pairing, write  $\cV=V\cap\Omega_{\mathcal{X}/\ZZ_p}(\mathcal{X})$ and let $\tV$ be the image of $\cV$ under the reduction map. 

Let $\cQ=\lbrace Q_1, Q_2\rbrace\in X^{(2)}(\QQ)$, where $X^{(2)}$ is the symmetric square of $X$. Suppose $\omega_1,\cdots, \omega_k$ is a basis of $\tV$. We fix a place $v$ of $\QQ(Q_1, Q_2)$ above $p$ and denote by $\tilde{Q}_i$ the reduction of $Q_i$ with respect to $v$. Let $t_{\tilde{Q_i}}$ be a uniformiser of $\tilde{Q_i}$. Then, we can expand $\omega_j$ around $\tilde{Q}_i$ with respect to $t_{\tilde{Q}_i}$ as a formal power series. In particular, it holds that
\begin{equation}
    \omega_j=(a_0(\omega_j, t_{\tilde{Q_i}}) + a_1(\omega_j, t_{\tilde{Q_i}})t_{\tilde{Q_i}} + a_2(\omega_j, t_{\tilde{Q_i}})t^2_{\tilde{Q_i}} + \cdots)dt_{\tilde{Q_i}}
\end{equation}
If $Q_1\neq Q_2$, we define
\begin{equation*}
    \cA = \begin{pmatrix}
        a_0(\omega_1, t_{\tilde{Q_1}}) & a_0(\omega_1, t_{\tilde{Q_2}})\\
        \vdots & \vdots \\
        a_0(\omega_k, t_{\tilde{Q_1}}) & a_0(\omega_k, t_{\tilde{Q_2}})
    \end{pmatrix},
\end{equation*}
and when $Q_1=Q_2$, we define
\begin{equation*}
    \cA = \begin{pmatrix}
        a_0(\omega_1, t_{\tilde{Q_1}}) & a_1(\omega_1, t_{\tilde{Q_1}})/2\\
        \vdots & \vdots \\
        a_0(\omega_k, t_{\tilde{Q_1}}) & a_1(\omega_k, t_{\tilde{Q_1}})/2
    \end{pmatrix}.
\end{equation*}

\begin{theorem}[{\cite[Theorem 3.2]{Siksek09b}}]\label{thm:single_point_in_residue_class}
Suppose $p\geq 5$. If $\rank(\cA)=2$, then $\cQ$ is the only point on $X^{(2)}(\QQ)$ in its residue class modulo $p$, i.e. for any $\cR\in X^{(2)}(\QQ)$ such that $\cQ\equiv \cR \pmod{v}$, we have $\cR=\cQ$.
\end{theorem}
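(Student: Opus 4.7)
The plan is to translate the existence of a second $\QQ$-rational point $\cR = \{R_1, R_2\}$ on $X^{(2)}$ reducing to $\cQ$ into a system of $p$-adic power series equations in two local parameters of the residue disk of $\cQ$, and then use $\rank(\cA) = 2$ to force $\cR = \cQ$. Since $[\cR - \cQ] \in J(\QQ)$ and $V$ annihilates $J(\QQ)$ under Coleman's pairing, for each basis element $\omega_j$ of $\tV$ (lifted to an element of $\cV$) we have
\[
\int_{Q_1}^{R_1}\omega_j + \int_{Q_2}^{R_2}\omega_j = 0,
\]
after relabeling $R_1, R_2$ if necessary so that $\tilde R_i = \tilde Q_i$.

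Next I would expand each integral as a power series in the uniformiser $t_{\tilde{Q_i}}$, shifted so that $Q_i$ corresponds to $0$. Setting $s_i = t_{\tilde{Q_i}}(R_i) \in p\ZZ_p$ (or a ramified extension, if $\cR$ is defined over a ramified quadratic extension of $\QQ_p$) gives
\[
\int_{Q_i}^{R_i}\omega_j = \sum_{k \ge 0} \frac{a_k(\omega_j,t_{\tilde{Q_i}})}{k+1}\, s_i^{k+1}.
\]
When $Q_1 \ne Q_2$, the residue disk of $\cQ$ in $X^{(2)}$ factors as the product of two one-dimensional residue disks, so $(s_1,s_2)$ are honest local coordinates, and summing over $i$ for each $j$ yields a system of power series in $(s_1,s_2)$ whose linearization at the origin is exactly $\cA$. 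When $Q_1 = Q_2$, the corresponding local coordinates on $X^{(2)}$ near the double point are the elementary symmetric functions $\sigma_1 = s_1+s_2$, $\sigma_2 = s_1 s_2$ — which lie in $\QQ_p$ thanks to $\cR \in X^{(2)}(\QQ)$ — and rewriting the power sums $s_1^{k+1}+s_2^{k+1}$ in terms of $(\sigma_1,\sigma_2)$ shows that $\cA$ is again (up to the normalisations chosen in its definition) the Jacobian of the system at the origin.

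The conclusion then follows from a two-variable Strassmann / Newton-polygon argument. The assumption $\rank(\cA)=2$ means that the linearization is already injective modulo $p$, which forces the $p$-adic parameters $(s_1,s_2)$ (resp.\ $(\sigma_1,\sigma_2)$) of $\cR$ to be divisible by $p$; then the higher-order contributions $a_k s^{k+1}/(k+1)$ for $k \ge 1$ have strictly larger $p$-adic valuation than the linear term, so the only solution in the residue disk is the trivial one, i.e.\ $\cR = \cQ$. The main obstacle — and the source of the hypothesis $p \ge 5$ — is controlling the denominators $k+1$ in the antiderivative: for $p \ge 5$ one checks $v_p\bigl(s^{k+1}/(k+1)\bigr) > v_p(s)$ for all $k \ge 1$ and $s \in p\ZZ_p$, but this fails in characteristics $2$ and $3$ where the denominators can swallow enough powers of $p$ to invalidate the leading-order analysis.
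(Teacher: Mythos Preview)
The paper does not prove this theorem: it is quoted verbatim from Siksek \cite[Theorem~3.2]{Siksek09b} and used as a black box, so there is no ``paper's own proof'' to compare against. Your sketch is the standard argument behind Siksek's result --- expand the vanishing Coleman integrals of $[\cR-\cQ]$ in local parameters on the residue disk of $\cQ$, identify $\cA$ as the Jacobian of the resulting system (in $(s_1,s_2)$ when $Q_1\neq Q_2$, in $(\sigma_1,\sigma_2)$ when $Q_1=Q_2$), and use a multivariable Hensel/Strassmann argument once $\rank(\cA)=2$ --- and your explanation of why $p\ge 5$ is needed (denominators $k+1$ in the antiderivative, combined with the possibility that the local parameters have half-integral valuation when the quadratic extension ramifies at $p$) is exactly the reason the bound in Siksek's paper is $p>2d$ for $d=2$. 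So the proposal is correct and in line with the cited source.
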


Suppose $\rho:X\rightarrow C$ is a degree $2$ map to a smooth non-singular curve $C$ and $\cC$ its proper minimal regular model. We also assume that $C$ has good reduction at $p$. Suppose that $\rho$ extends to a morphism $\cX\rightarrow\cC$ which corresponds to a degree $2$ map  $\tX=\tilde{\cX}\rightarrow\cC_{\Fp}$. The map $\rho:X\rightarrow C$ induces the trace map on the holomorphic differentials $\Tr\colon\Omega_{X /\QQ_p}(X)\rightarrow\Omega_{C/\QQ_p}(C)$, for more details, see \cite[\S2.6]{BoxGajovicGoodman22}.

Let $\cV_0=\mathcal{V}\cap\ker\left(\Omega_{X /\QQ_p}(X)\xrightarrow{\Tr}\Omega_{C/\QQ_p}(C)\right)$ and $\tV_{0}$ the image of $\cV_0$ under the reduction map. Let $\cQ=\lbrace Q_1, Q_2\rbrace\in\rho^*C(\QQ)$, $\omega_1,\cdots,\omega_{k'}$ be a basis of $\tV_0$ and $p$, $v$, $\tilde{Q}_1$, $t_{\tilde{Q}_1}$ be as above.

\begin{theorem}[{\cite[Theorem 4.3]{Siksek09b}}]\label{thm:pullback_points_in_residue_class}
Suppose $p\geq 5$. If there exist $\omega_i$ for some $i\in\{1,\cdots, k'\}$ such that 
\begin{equation*}
    \frac{\omega_i}{dt_{\tilde{Q}_1}}\mid_{t_{\tilde{Q}_1}=0}\neq 0,
\end{equation*}
then every point in $X^{(2)}(\QQ)$ in the residue class of $\cQ$ belongs to $\rho^*(C(\QQ))$. 
\end{theorem}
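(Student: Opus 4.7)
The plan is to exploit that $\cV_0$ consists of anti-invariant differentials, so that the Coleman-integral constraint $[\cR-\cQ]\in V^\perp$ restricted to $V_0$ forces $R_2=\sigma(R_1)$, where $\sigma\in\Aut(X)$ is the involution attached to the degree $2$ map $\rho$. Writing the trace as $\Tr\omega=\omega+\sigma^*\omega$, the kernel of $\Tr$ in $\Omega_{X/\QQ_p}(X)$ is exactly the anti-invariant subspace, so $V_0\subseteq\{\omega:\sigma^*\omega=-\omega\}$. Since $\cQ\in\rho^*C(\QQ)$, I may assume $Q_2=\sigma(Q_1)$; the ramified case $Q_1=Q_2$ would be treated similarly using a single $\sigma$-antisymmetric uniformizer and is strictly easier. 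At the reductions I would choose uniformizers $t_{\tilde{Q}_1}, t_{\tilde{Q}_2}$ that are $\sigma$-matched, in the sense that $\sigma^*t_{\tilde{Q}_2}\equiv -t_{\tilde{Q}_1}$ on the formal neighbourhood of $\tilde{Q}_1$; any uniformizer at $\tilde{Q}_2$ may be rescaled by a unit to enforce this normalization.

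With these parameters, anti-invariance of $\omega\in\cV_0$ forces its local expansion at $\tilde{Q}_2$ to have coefficients $(-1)^j a_j(\omega,t_{\tilde{Q}_1})$. For a rational point $\cR=\{R_1,R_2\}\in X^{(2)}(\QQ)$ in the residue class of $\cQ$, set $s_i:=t_{\tilde{Q}_i}(R_i)\in p\ZZ_p$. Since $[\cR-\cQ]\in J(\QQ)$, the Coleman pairing with every $\omega\in V_0$ vanishes, yielding
\begin{equation*}
    0 \;=\; \sum_{i=1}^2\int_{Q_i}^{R_i}\omega \;=\; \sum_{j\geq 0}\frac{a_j(\omega,t_{\tilde{Q}_1})}{j+1}\bigl(s_1^{j+1}+(-1)^j s_2^{j+1}\bigr).
\end{equation*}
A direct check shows that $s_1^{j+1}+(-1)^j s_2^{j+1}$ is divisible by $s_1+s_2$ for every $j\geq 0$, so the right-hand side factors as $(s_1+s_2)\,F_{\omega}(s_1,s_2)$, where $F_{\omega}$ is a Coleman-convergent power series with constant term $a_0(\omega,t_{\tilde{Q}_1})$.

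Applying this to the hypothesized $\omega_i$, the constant term $a_0(\omega_i,t_{\tilde{Q}_1})$ is a $p$-adic unit by assumption. Since $s_1,s_2\in p\ZZ_p$, the series $F_{\omega_i}(s_1,s_2)$ reduces to its constant term modulo $p$ and is therefore still a $p$-adic unit, which forces $s_1+s_2=0$. This translates back to $R_2=\sigma(R_1)$, so $\cR=\{R_1,\sigma(R_1)\}$ is a Galois-stable set; consequently $\rho(R_1)\in C$ is Galois-fixed, hence rational, and $\cR\in\rho^*(C(\QQ))$, as required.

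The main obstacle is the bookkeeping around the $\sigma$-matched choice of uniformizers and the sign tracking in the anti-invariance relation, together with the degenerate ramified case. Beyond that, the hypothesis $p\geq 5$ enters only to guarantee convergence of the Coleman integrals (the denominators $j+1$ must not create $p$-adic poles that outpace $s_i^{j+1}$) and to ensure that $F_{\omega_i}$ reduces to its leading term modulo $p$ throughout the residue disc.
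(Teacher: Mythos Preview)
The paper does not give its own proof of this statement; it is quoted from \cite[Theorem~4.3]{Siksek09b} and used as a black box, so there is nothing in the paper to compare against directly. That said, your argument is the standard one and is correct in outline: the key mechanism---that $\cV_0\subseteq\ker(\Tr)$ consists of $\sigma$-anti-invariant differentials, so the Coleman-integral relation for $[\cR-\cQ]$ factors through the single quantity $s_1\pm s_2$, whose vanishing is forced by the unit constant term---is exactly the idea behind the cited result.

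Two minor remarks. First, your normalisation $\sigma^*t_{\tilde Q_2}\equiv -t_{\tilde Q_1}$ is harmless but adds sign bookkeeping; the more natural choice $t_{\tilde Q_2}:=t_{\tilde Q_1}\circ\sigma$ gives $b_j=-a_j$ directly, the factor $(s_1-s_2)$ in place of $(s_1+s_2)$, and the same conclusion $R_2=\sigma(R_1)$ with fewer signs to track. Second, the case you defer---where $\tilde Q_1=\tilde Q_2$ on the special fibre---is not only the ramified case $Q_1=Q_2$: it also includes the possibility that $Q_1\neq Q_2$ but their reductions coincide, in which case $\{R_1,R_2\}$ may be defined over a ramified quadratic extension of $\QQ_p$ and the parameters $s_i$ lie in $\mathfrak m\setminus p\mathcal O$. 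This needs a slightly more careful convergence estimate (and is where $p\geq 5$ rather than $p\geq 3$ is genuinely used), but the argument goes through along the same lines.
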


\subsection{Relative Symmetric Chabauty for $\XDt$}
Let $W$ be the image of $1-w_{11}^*:\Omega_{\XDt/\QQ}\rightarrow \Omega_{\XDt/\QQ}$. 

\begin{proposition}\label{prop:annihilator_space}
The space $W$ annihilates $\JDt(\QQ)$ under the integration pairing, hence
\begin{equation*}
    \int_0^{D}\omega = 0,
\end{equation*}
for all $\omega\in W$ and $D\in \JDt(\QQ)$. In addition, $W$ lies in the kernel of the $\Tr_{\phiXDt}:\Omega_{\XDt/\QQ}\rightarrow\Omega_{C/\QQ}$.
\end{proposition}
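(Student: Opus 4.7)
The proposition has two assertions, and I would prove them in the order stated, since the annihilation claim will invoke the trace claim.

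For the kernel-of-trace statement, the key observation is that $\phiXDt\colon \XDt \to C$ is a Galois degree-$2$ cover with deck involution $w_{11}$, so the trace on differentials is just the sum over the $w_{11}$-orbit: $\Tr_{\phiXDt}(\omega) = \omega + w_{11}^{*}\omega$. Applying this to a form of type $(1-w_{11}^{*})\eta \in W$ and using $(w_{11}^{*})^{2} = \id$ telescopes the result to zero. Thus $W \subseteq \ker \Tr_{\phiXDt}$.

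For the annihilation claim, I would reduce from $\JDt(\QQ)$ to the explicit finite-index subgroup $G = \langle D_1, D_2\rangle$ furnished by Proposition \ref{prop:finite-index-sbgp-of-J(Q)}. The reduction is legitimate because the integration pairing takes values in the torsion-free group $\QQ_p$: if $W$ annihilates $G$, then for every $D \in \JDt(\QQ)$ we have some positive integer $n$ with $nD \in G$, so $n\int_0^D \omega = \int_0^{nD}\omega = 0$, forcing $\int_0^D \omega = 0$. It then suffices to check the two generators. The class $D_1$ is $5$-torsion and is therefore annihilated by every holomorphic differential, because its integral lies in $\QQ_p$, which has no torsion. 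The class $D_2$ equals $\pm\,\phiXDt^{*}(G_2)$ for $G_2 \in J_C(\QQ)$ (by construction in Proposition \ref{prop:finite-index-sbgp-of-J(Q)}), so the projection formula for the degree-$2$ morphism $\phiXDt$ gives $\int_0^{D_2}\omega = \pm\int_0^{G_2}\Tr_{\phiXDt}(\omega)$, which vanishes because $\omega \in W \subseteq \ker \Tr_{\phiXDt}$ by the first part.

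I do not anticipate a technical obstacle: the proof is essentially formal once the structural input of Proposition \ref{prop:finite-index-sbgp-of-J(Q)} is in hand, namely that $\JDt(\QQ)$ is generated up to finite index by a torsion class and a pullback from $C$. What the proposition actually buys for the sequel is that $W$ simultaneously enjoys the two properties needed to feed symmetric Chabauty: it annihilates $\JDt(\QQ)$ (as required in Theorem \ref{thm:single_point_in_residue_class}) and lies inside $\ker\Tr_{\phiXDt}$ (as required for the relative/pullback variant, Theorem \ref{thm:pullback_points_in_residue_class}), so the same family of differentials will drive both criteria in the quadratic-points analysis.
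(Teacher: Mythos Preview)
Your proof is correct and follows essentially the same approach as the paper's. The paper's own proof is a one-line reference to \cite[Lemma~3.4]{Box21a}, invoking the equality $\rank(\JDt(\QQ))=\rank(J_C(\QQ))$ together with the identification of $1+w_{11}^{*}$ with the trace; your argument simply unpacks this by working with the explicit generators $D_1,D_2$ from Proposition~\ref{prop:finite-index-sbgp-of-J(Q)} (torsion plus a pullback from $C$), which is exactly the concrete content of that rank equality.
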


\begin{proof}
The proof is similar to \cite[Lemma 3.4]{Box21a} because $\rank(\JDt(\QQ))=\rank(J_C(\QQ))$ and $1+w_{11}^*$ is the trace map with respect to $\phiXDt$. We also use \cite[Lemma 2.2]{BoxGajovicGoodman22}.
\end{proof}

Let $\tilde{W}$ be the image of $W\cap\Omega_{\calXDt/\Zp}$ under the reduction map.

\begin{proposition}\label{prop:annihilator_space_modp}
The space $\tilde{W}$ is the image of $1-\tilde{w}_{11}^*:\Omega_{\tilcalXDt/\Fp}\rightarrow\Omega_{\tilcalXDt/\Fp}$.
\end{proposition}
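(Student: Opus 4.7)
The plan is to identify both $\tilde{W}$ and the image of $1-\tilde{w}_{11}^*$ with the $(-1)$-eigenspace of $\tilde{w}_{11}^*$ acting on $\Omega_{\tilcalXDt/\Fp}$, and then observe that these eigenspaces are compatible under reduction. First I would record that, because $w_{11}$ is defined over $\QQ$ by Lemma \ref{lemma:AtkinLehner} and $\XDt$ has good reduction at $p$, the involution $w_{11}$ extends uniquely to an involution of the smooth model $\calXDt$ over $\Zp$ whose reduction is exactly $\tilde{w}_{11}$. Pullback of differentials therefore commutes with reduction: $w_{11}^*$ stabilises $\Omega_{\calXDt/\Zp}$, and the surjection $\Omega_{\calXDt/\Zp}\otimes\Fp\twoheadrightarrow\Omega_{\tilcalXDt/\Fp}$ (an isomorphism by smoothness) is equivariant for $w_{11}^*$ and $\tilde{w}_{11}^*$.

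Next, since $(w_{11}^*)^2=\id$ and $p>2$, the endomorphism $w_{11}^*$ of the free $\Zp$-module $\Omega_{\calXDt/\Zp}$ splits via the orthogonal idempotents $\tfrac12(1\pm w_{11}^*)\in\End_{\Zp}(\Omega_{\calXDt/\Zp})$ into a direct sum $\Omega^{+}\oplus\Omega^{-}$ of $\pm1$-eigenspaces. For any $\omega=\omega^{+}+\omega^{-}$ we have $(1-w_{11}^*)\omega=2\omega^{-}$, and since $2\in\Zp^{\times}$, the image of $1-w_{11}^*$ over $\QQ_p$ (respectively over $\Fp$) is exactly the corresponding $(-1)$-eigenspace. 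Applied over $\QQ_p$ this gives $W\otimes\QQ_p=\Omega^{-}\otimes\QQ_p$, so $W\cap\Omega_{\calXDt/\Zp}=\Omega^{-}$.

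Finally, because the idempotents have coefficients in $\Zp^{\times}$, the integral eigenspace decomposition is compatible with reduction modulo $p$: the image of $\Omega^{-}$ in $\Omega_{\tilcalXDt/\Fp}$ is precisely the $(-1)$-eigenspace of $\tilde{w}_{11}^*$, and by the same two-line eigenspace computation this coincides with the image of $1-\tilde{w}_{11}^*$. This yields the desired equality $\tilde{W}=\image(1-\tilde{w}_{11}^*)$. The only genuine point to verify is the compatibility between the integral eigenspace decomposition and reduction, which rests entirely on $2$ being a unit; the hypothesis $p>2$ already in force in the section therefore suffices, and everything else is formal.
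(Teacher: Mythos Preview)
Your argument is correct and is essentially the same as the one the paper invokes: the paper simply cites \cite[Lemma 3.6 and Proposition 3.5]{Box21a}, and what you have written is precisely the content of those results spelled out, namely the splitting of $\Omega_{\calXDt/\Zp}$ via the idempotents $\tfrac12(1\pm w_{11}^*)$ (valid since $p>2$), the identification of $\image(1-w_{11}^*)$ with the $(-1)$-eigenspace at each level, and the compatibility of this splitting with reduction. There is nothing to add.
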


\begin{proof}
We use \cite[Lemma 3.6]{Box21a} and the proof follows in the same way as the proof of \cite[Proposition 3.5]{Box21a}.
\end{proof}

\section{Mordell-Weil sieve}\label{sec:mordell_weil}

In this section, we briefly recall the Mordell-Weil sieve as discussed in \cite{Siksek09b, Box21a, BoxGajovicGoodman22} and which has its origin in \cite{BruinStoll08}. Again we follow \cite[Section 2.4]{Box21a}.

Suppose $X/\QQ$ be smooth projective curve with Jacobian $J$ and $\rho:X\rightarrow C$ a degree 2 map (defined over $\QQ$) where $C/\QQ$ is another curve. We assume that we have the following data:
\begin{enumerate}
    \item $D_1,\cdots, D_r$ are divisors of $J(\QQ)$ that generate a finite index subgroup $G$ of $J(\QQ)$,

    \item A number $N$ such that $NJ(\QQ)\subset G$,

    \item A rational degree $2$ divisor which we denote by $\infty$,
    
    \item $\cL^\prime$ is a known finite subset of $X^{(2)}(\QQ)$. The set $\cL^\prime$ may also include points from $\rho^*(C(\QQ))$,

    \item $p_1,\cdots, p_n$ are primes of good reduction for $X$.
\end{enumerate}
We define $\cL=\cL^\prime\cup\rho^*C(\QQ)$. 

Let $p$ be one of the primes $p_1,\cdots, p_r$. We define the maps $\phi:\ZZ^n\rightarrow G$ where $\phi(a_1,\cdots, a_n)=\sum_{i=1}^{n}a_iD_i$, $\iota:X^{(2)}(\QQ)\rightarrow G$ where $\iota(\cQ)=N[\cQ-\infty]$ and $\iota_p:\tX^{(2)}(\Fp)\rightarrow J(\Fp)$ where $\iota_p(\cR)=N[\cR - \tilde{\infty}]$. In particular, we have the following diagram

\begin{center}
\begin{tikzcd}
\cL \arrow{r}{i} \arrow{rd} & X^{(2)}(\QQ) \arrow{r}{\iota} \arrow{d}{\red} & G \arrow{d}{\red}& \ZZ^r\arrow{l}{\phi} \arrow{ld}{\phi_p}\\

& \tX^{(2)}(\Fp) \arrow{r}{\iota_p}& J(\Fp) & 
\end{tikzcd}
\end{center}
where $\red$ is the reduction map and $\phi_p:=\red\circ~\phi$.

Let $\mathcal{M}_p\subset \tX^{(2)}(\Fp)$ be the subset of points $\cR\in \iota_p^{-1}(\image(\phi_p))$ which satisfy one of the following:
\begin{itemize}
    \item $\cR\not\in \red(\cL^\prime)$,

    \item $\cR=\tilde{\cQ}$ for some point $\cQ\in\cL^\prime\setminus\rho^*C(\QQ)$ not satisfying the conditions of Theorem \ref{thm:single_point_in_residue_class},

    \item $\cR=\tilde{\cQ}$ for some $\rho^*C(\QQ)$ not satisfying the conditions of Theorem \ref{thm:pullback_points_in_residue_class}.
\end{itemize}

\begin{theorem}[Mordell-Weil sieve {\cite[Theorem 5.1]{Siksek09b}}, {\cite[Theorem 2.6]{Box21a}}]\label{thm:mordell_weil_sieve}
Suppose
\begin{equation*}
    \bigcap_{j=1}^{r}\phi^{-1}_{p_j}\left(\iota_{p_j}\left(\mathcal{M}_{p_j}\right)\right)=\emptyset,
\end{equation*}
then $X^{(2)}(\QQ)=\cL$.
\end{theorem}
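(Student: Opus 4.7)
The plan is to argue by contradiction: suppose there exists $\cQ \in X^{(2)}(\QQ) \setminus \cL$ and derive that the intersection displayed in the theorem is nonempty. Since $\iota(\cQ) = N[\cQ - \infty]$ lies in $NJ(\QQ) \subseteq G$ by the defining property of $N$, we can choose a tuple $(a_1, \ldots, a_r) \in \ZZ^r$ with $\phi(a_1, \ldots, a_r) = \iota(\cQ)$. Reducing modulo each $p_j$ and using the commutativity of the diagram preceding the theorem yields
\begin{equation*}
\phi_{p_j}(a_1, \ldots, a_r) = \red(\iota(\cQ)) = \iota_{p_j}(\tilde{\cQ}),
\end{equation*}
so that $(a_1, \ldots, a_r)$ lies inside $\phi_{p_j}^{-1}(\iota_{p_j}(\{\tilde{\cQ}\}))$ for every $j$.

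The heart of the argument is to verify that $\tilde{\cQ} \in \mathcal{M}_{p_j}$ for each $j$, by tracing through the three bullets in the definition of $\mathcal{M}_{p_j}$. If $\tilde{\cQ} \notin \red(\cL^\prime)$ then the first bullet applies at once. Otherwise $\tilde{\cQ} = \tilde{\cR}$ for some $\cR \in \cL^\prime$, and we split according to whether $\cR$ belongs to $\rho^*C(\QQ)$. If $\cR \notin \rho^*C(\QQ)$, then the matrix $\cA$ attached to $\cR$ cannot have rank $2$, for otherwise Theorem \ref{thm:single_point_in_residue_class} would make $\cR$ the unique rational degree-$2$ point in its residue class, forcing $\cQ = \cR \in \cL$; hence the second bullet is satisfied. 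If instead $\cR \in \rho^*C(\QQ)$, then because $\cQ \notin \cL \supseteq \rho^*C(\QQ)$, the residue class of $\cR$ contains the rational degree-$2$ point $\cQ$ lying outside $\rho^*C(\QQ)$, so the conclusion of Theorem \ref{thm:pullback_points_in_residue_class} fails at $\cR$ and the third bullet is satisfied.

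Combining these observations, the tuple $(a_1, \ldots, a_r)$ lies in $\bigcap_{j=1}^{r}\phi_{p_j}^{-1}(\iota_{p_j}(\mathcal{M}_{p_j}))$, contradicting the hypothesis that this intersection is empty. The proof itself is essentially free of computation; the only delicate step is the case analysis above, which amounts to bookkeeping once Theorems \ref{thm:single_point_in_residue_class} and \ref{thm:pullback_points_in_residue_class} are available. Indeed, the three bullets in the definition of $\mathcal{M}_{p_j}$ are tailored exactly so that the reduction of any hypothetical extra rational degree-$2$ point is forced into $\mathcal{M}_{p_j}$, which is the whole reason the Mordell-Weil sieve works.
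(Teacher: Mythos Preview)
Your argument is correct and is exactly the standard proof of the Mordell--Weil sieve as given in the cited references \cite[Theorem~5.1]{Siksek09b} and \cite[Theorem~2.6]{Box21a}; the paper itself does not supply a proof but simply quotes the result from those sources. The only point worth making explicit is that $\tilde{\cQ}\in\iota_{p_j}^{-1}(\image(\phi_{p_j}))$ (the precondition for membership in $\mathcal{M}_{p_j}$), which you use implicitly via $\iota_{p_j}(\tilde{\cQ})=\phi_{p_j}(a_1,\ldots,a_r)$, and that your case analysis goes through for \emph{any} choice of $\cR\in\cL'$ reducing to $\tilde{\cQ}$, so the existential form of the second and third bullets is indeed satisfied.
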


\section{Proof of Theorem \ref{thm:quadratic_points_XDt}}

In this section we give the proof of Theorem \ref{thm:quadratic_points_XDt}. With the notation of Section \ref{sec:mordell_weil} we set $\cL^\prime=\emptyset$ and $\cL=\phiXDt^*C(\QQ)\subset \XDt^{(2)}(\QQ)$.

\begin{proof}[Proof of Theorem \ref{thm:quadratic_points_XDt}] We apply relative symmetric Chabauty for $p=5,7,13,17,19,23$ and we prove that the elements in $\cL$ are the only elements in their residue disks modulo $p$. Because $\cL^\prime=\emptyset$ we only had to apply Theorem \ref{thm:pullback_points_in_residue_class}. Finally, it is enough to apply the Mordell-Weil sieve, Theorem \ref{thm:mordell_weil_sieve}, where $G$ is the finite-index subgroup of $\JDt(\QQ)$ from Proposition~\ref{prop:finite-index-sbgp-of-J(Q)}, and $N=10$, for the above choice of primes and get $\cL=\XDt^{(2)}(\QQ)$.
\end{proof}

\begin{proof}[Proof of Theorem \ref{thm:main_local_global}]
We may assume that $K\neq \QQ(\sqrt{-11})$ by Theorem~\ref{thm:Banwait-Cremona}. For the other quadratic fields, we may apply Theorem~\ref{thm:Sutherland} and Remark~\ref{rmk:K-rational-points-XD2n}, which explains that it is enough to compute the quadratic points on $\XDt$. 

From Theorem \ref{thm:quadratic_points_XDt} we have determined the quadratic points of $\XDt$. All the quadratic points of $\XDt$ are pullbacks of the rational points of $C$. The image of the set of rational points of $C$ under the $j$-map has been computed in the LMFDB modular curves database\footnote{Since $C\simeq X_{sp}^+(11)$ the $j$-map of $C$ can be found in \url{https://beta.lmfdb.org/ModularCurve/Q/11.66.2.a.1/}.} and is equal to
\begin{equation*}
    J = [\infty, -3375, 8000, -884736, 16581375, -884736000].
\end{equation*}

Let $\Phi_{11}(x,y)$ be the class modular polynomial. With a short script we show that $\Phi_{11}(x, j_0)$ has a linear factor over $\QQ[x]$, hence it also has a linear factor over any quadratic extension $K/\QQ$, for all possible $j_0\in J$ with $j_0\neq\infty$, which by \cite[Proposition 10.1]{BanwaitCremona14} is enough to conclude the proof.
\end{proof}

\bibliographystyle{plain}
\bibliography{my_bibliography}{}
\end{document}